\documentclass[11pt]{scrartcl}

\usepackage{amsmath,amssymb,amsthm}
\usepackage[english]{babel}
\usepackage[utf8]{inputenc}
\usepackage{csquotes}
\usepackage{tikz}
\usepackage[a4paper,headheight=14pt]{geometry}
\usepackage{enumitem}
\usetikzlibrary{matrix,arrows.meta,quotes,graphs,calc,cd}
\usepackage{csquotes}
\usepackage[backend=biber, style=alphabetic]{biblatex}
\addbibresource{/home/bene/Dokumente/Mathe/bibliography.bib}
\usepackage{hyperref}
\usepackage{microtype}

\makeatletter\def\blx@nowarnpolyglossia{}\makeatother %

\parindent 0pt
\parskip 1ex
\emergencystretch 1em
\widowpenalty 10000
\clubpenalty 10000

\newtheorem{lem}{Lemma}[section]
\newtheorem{prop}[lem]{Proposition}
\newtheorem{thm}[lem]{Theorem}

\newtheorem{corl}[lem]{Corollary}
\theoremstyle{definition}
\newtheorem{dfn}[lem]{Definition}
\theoremstyle{remark}

\newcommand{\C}{{\mathbb{C}}}

\newcommand{\R}{{\mathbb{R}}}

\newcommand{\N}{{\mathbb{N}}}

\newcommand{\ev}{\mathrm{ev}}
\newcommand{\llbracket}{[\kern -0.25em[}
\newcommand{\rrbracket}{]\kern -0.25em]}
\newcommand{\lllbracket}{[\kern -0.25em[\kern -0.25em[}
\newcommand{\rrrbracket}{]\kern -0.25em]\kern -0.25em]}
\newcommand{\bbrack}[1]{\llbracket#1\rrbracket}

\DeclareMathOperator{\id}{id}

\DeclareMathOperator{\as}{as}

\title{Thomsen's D-theory and the K-theoretic Kronecker pairing}
\author{Benedikt Hunger}

\begin{document}

\maketitle

\begin{abstract}
  We calculate Thomsen's D-theory groups $D(\Sigma,B)$, where $\Sigma=C_0(\R)$. Furthermore, we relate the pairing $K_0(A)\times
  E(A,B)\to K_0(B)$ to a similar pairing which is defined using D-theory.
\end{abstract}

\section{Introduction}

Connes and Higson \cite{connes-higson-deformations-morphismes-asymptotiques} introduced the so-called E-theory groups $E(A,B)$
as the morphism sets in an additive category $E$ whose objects are the separable C*-algebras. There is a natural functor from
the category of C*-algebras to the category $E$ which has the universal property that every stable, homotopy-invariant, and
half-exact functor from the category of C*-algebras to an additive category factors through $E$. In particular, E-theory is
closely related to Kasparov's KK-theory \cite{kasparov-the-operator-k-functor}. E-theory was successfully used by Higson and
Kasparov \cite{higson-bivariant-k-theory-and-the-novikov-conjecture,higson-kasparov-operator-k-theory-properly-isometrically}
to prove special cases of the Baum--Connes conjecture.

In \cite{thomsen-discrete-asymptotic-homomorphisms}, Thomsen introduced D-theory, a discrete variant of Connes's and Higson's
E-theory. Like E-theory, D-theory is a bifunctor from the category of separable C*"=algebras to the category of abelian groups,
and there exist products
\begin{align*}
  D(A,B)\times D(B,C)&\to D(A,C),\\
  D(A,B)\times E(B,C)&\to D(A,C),\\
  E(A,B)\times D(B,C)&\to D(A,C),
\end{align*}
relating E-theory and D-theory. 

It is a well-known calculation that the E-theory groups satisfy $E(\C,B)\cong K_0(B)$ for all C*-algebras $B$. However, there
is, up to now, no concrete calculation of D-theory groups at all. Of course, one might hope to calculate $D(\C,B)$ in a similar
way. Since D-theory (like E-theory) supports a suspension isomorphism $\Sigma\colon D(A,B)\overset\cong\to D(\Sigma A,\Sigma B)$
for all separable C*-algebras $A$ and $B$ (where $\Sigma A=C_0(\R)\otimes A$ is the suspension of $A$), it suffices to calculate
the groups $D(\Sigma,B)$ for $\Sigma=\Sigma\C=C_0(\R)$.

Thus, the aim of the first part of this paper will be to prove the existence of a natural isomorphism \begin{equation}
D(\Sigma,B)\cong\frac{\prod_{n\in\N}K_0(B)}{\bigoplus_{n\in\N}K_0(B)}\label{eq:D(S,B)} \end{equation} where $\Sigma=C_0(\R)$ is
the suspension C*-algebra. The proof of \eqref{eq:D(S,B)} is similar, in structure, to the proof of the natural isomorphism
$E(\C,B)\cong K_0(B)$. However, the technical details are slightly more complicated and involve the use of the concrete form of
Cuntz's periodicity map \cite{cuntz-k-theory-c*-algebras}. Product-modulo-sum quotients as in the right hand of
\eqref{eq:D(S,B)} have appeared before in various contexts, for example in the work of Blackadar and Kirchberg
\cite{blackadar-kirchberg-generalized-inductive-limits-of-finite-dimensional-cstar-algebras} on inductive limits of C*-algebras,
of Hanke and Schick \cite{hanke-schick-enlargeability-and-index-theory-I,hanke-schick-enlargeability-and-index-theory-infinite,
hanke-schick-novikov-low-degree-cohomology} or of Carrión and Dadarlat \cite{carrion-dadarlat-almost-flat-k-theory} on the maximal 
Baum--Connes assembly map.

There exists a natural pairing $K_0(A)\times E(A,B)\cong E(\C,A)\times E(A,B)\to E(\C,B)\cong K_0(B)$, defined using the
E-theory product \cite{connes-higson-deformations-morphismes-asymptotiques}. By \eqref{eq:D(S,B)} there is also a pairing
\[
  \frac{\prod_{n\in\N}K_0(A)}{\bigoplus_{n\in\N}K_0(A)}\times E(A,B)\cong D(\Sigma,A)\times E(A,B)\to D(\Sigma,B)
  \cong\frac{\prod_{n\in\N}K_0(B)}{\bigoplus_{n\in\N}K_0(B)}
\]
defined using Thomsen's product. In the second part of this paper, we will show that this pairing agrees with the pairing
$\prod_{n\in\N}K_0(A)/\bigoplus_{n\in\N}K_0(A)\times E(A,B)\to\prod_{n\in\N}K_0(B)/\bigoplus_{n\in\N}K_0(B)$ induced by 
the E-theory product. This shows that it is possible to investigate the asymptotic behaviour of the pairing $K_0(A)\times
E(A,B)\to K_0(B)$ using Thomsen's D-theory product.

This paper is based on part of the author's doctoral dissertation at the Universität Augsburg. The author
would like to thank his advisor Bernhard Hanke for his support and advice, and his helpful remarks on first version of this
paper. The dissertation project was supported by a scholarship of the \emph{Studienstiftung des deutschen Volkes} and by the
TopMath program of the \emph{Elitenetzwerk Bayern}.

\section{E-theory and D-theory}

We will define the D-theory groups, and in particular Thomsen's products, in a way which differs slightly from
\cite{thomsen-discrete-asymptotic-homomorphisms}, and which is inspired by the definition of the corresponding objects for
E-theory in \cite{guentner-higson-trout-equivariant-e-theory}. We will also review the definition of E-theory, as we will need
it later on.

If $X$ is a locally compact Hausdorff space and $B$ is a C*-algebra, we denote by $C_b(X;B)$ the C*-algebra of bounded continuous
$B$-valued continuous functions on $X$, and by $C_0(X;B)\subset C_b(X;B)$ the ideal of functions vanishing at infinity. We are
particularly interested in the special case $X=P=[0,\infty)$, where we abbreviate $\mathcal TB=C_b(P;B)$ and
$\mathcal T_0B=C_0(P;B)$. Similarly, we write $\mathcal T_\delta B=C_b(\N;B)$,
$\mathcal T_{\delta,0}B=C_0(\N;B)$.

\begin{dfn}
  The \emph{discrete asymptotic algebra} \cite{thomsen-discrete-asymptotic-homomorphisms} over $B$ is the C*-algebra $\mathcal
  A_\delta B=\mathcal T_\delta B/\mathcal T_{\delta,0}B$, and the \emph{asymptotic algebra}
  \cite{guentner-higson-trout-equivariant-e-theory} over $B$ is $\mathcal AB=\mathcal TB/\mathcal T_0B$.
\end{dfn}

Every *-ho\-mo\-mor\-phism $f\colon A\to B$ induces, by postcomposition, *-ho\-mo\-mor\-phisms $C_b(X;A)\to C_b(X;B)$ which
restrict to *-ho\-mo\-mor\-phisms $C_0(X;A)\to C_0(X;B)$. In particular, $f$ also induces *-ho\-mo\-mor\-phisms $\mathcal
A_\delta A\to\mathcal A_\delta B$ and $\mathcal AA\to\mathcal AB$. These definitions turn $\mathcal A$ and $\mathcal A_\delta$
into functors, which are easily seen to be exact.

\begin{dfn}
  An \emph{asymptotic homomorphism} between C*-algebras $A$ and $B$ is a *-ho\-mo\-mor\-phism $A\to\mathcal AB$. Similarly, a
  \emph{discrete asymptotic homomorphism} is a *-ho\-mo\-mor\-phism $A\to\mathcal A_\delta B$.
\end{dfn}

The E-theory and D-theory groups are defined as equivalence classes of asymptotic homomorphisms and discrete asymptotic homomorphisms,
respectively, modulo an appropriate type of homotopy. In fact, it turns out that the usual notion of homotopy of
*-ho\-mo\-mor\-phisms is too restrictive for asymptotic homomorphisms, and that the following provides a useful notion of
homotopy between (discrete) asymptotic homomorphisms.

\begin{dfn}
  An \emph{asymptotic homotopy} is a *-ho\-mo\-mor\-phism $H\colon A\to\mathcal AIB$, where $IB=C([0,1],B)$ is the C*-algebra of
  continuous $B$-valued functions on the unit interval $[0,1]$. The asymptotic homomorphisms $\mathcal A\ev_0\circ H$ and
  $\mathcal A\ev_1\circ H$ from $A\to B$ are then called \emph{asymptotically homotopic}. Here $\ev_\tau\colon IB\to B$ denotes
  the evaluation at $\tau\in[0,1]$.

  Similarly, a \emph{discrete asymptotic homotopy} is a *-ho\-mo\-mor\-phism $H\colon A\to\mathcal A_\delta IB$, and again we
  call $\mathcal A_\delta\ev_0\circ H$ and $\mathcal A_\delta\ev_1\circ H$ \emph{asymptotically homotopic}.
\end{dfn}

One can show \cite[Proposition 2.3]{guentner-higson-trout-equivariant-e-theory} that asymptotic homotopy defines an equivalence 
relation on the sets of (discrete) asymptotic homomorphisms from $A$ to $B$. 

\begin{dfn}
  We denote the set of asymptotic homotopy classes of asymptotic homomorphisms from $A$ to $B$ by $\bbrack{A,B}$, and the set of
  asymptotic homotopy classes of discrete asymptotic homomorphisms from $A$ to $B$ by $\bbrack{A,B}_\delta$.
\end{dfn}

\begin{dfn}
  If $A$ and $B$ are separable\footnote{The separability assumption here is in order to make sure that we can define the products 
  later on.} C*-algebras, then we define
  \[
    E(A,B)=\bbrack{\Sigma A\otimes\mathcal K,\Sigma B\otimes\mathcal K}
  \]
  and
  \[
    D(A,B)=\bbrack{\Sigma A\otimes\mathcal K,\Sigma^2B\otimes\mathcal K}_\delta,
  \]
  where $\Sigma A=\{\phi\in IA:\phi(0)=\phi(1)=0\}\cong C_0(\R)\otimes A$ is the \emph{suspension} of $A$, and where 
  $\mathcal K=\mathcal K(\ell^2)$ is the C*-algebra of compact operators on a separable Hilbert space.
\end{dfn}

Note that the definitions of the E-theory and D-theory groups directly imply that there are stability isomorphisms $E(A,B)\cong
E(A\otimes\mathcal K,B)\cong E(A,B\otimes\mathcal K)$ and analogous stability isomorphisms for the D-theory groups.

The double suspension which appears in the definition of $D(A,B)$ in front of the C*-algebra $B$ has its origin in the following
alternative viewpoint towards the discrete asymptotic algebra: The inclusion $\N\subset P=[0,\infty)$ induces restriction maps
$\mathcal TB\to\mathcal T_\delta B$ and $\mathcal T_0B\to\mathcal T_{\delta,0}B$ for all C*-algebras $B$. These maps are clearly
natural, so the induce a natural transformation $\mathcal A\to\mathcal A_\delta$, which is clearly surjective. Thus, we may
define another functor by considering the kernel of this natural transformation.

\begin{dfn}
  The \emph{sequentially trivial asymptotic algebra} \cite{thomsen-discrete-asymptotic-homomorphisms} over $B$ is the C*-algebra
  $\mathcal A_0B=\ker(\mathcal AB\to\mathcal A_\delta B)$. A \emph{sequentially trivial asymptotic homomorphism} is a
  *-ho\-mo\-mor\-phism $A\to\mathcal A_0B$, and a \emph{sequentially trivial asymptotic homotopy} is a 
  *-ho\-mo\-mor\-phism $A\to\mathcal A_0IB$ as before. We denote by $\bbrack{A,B}_0$ the set of asymptotic
  homotopy classes of sequentially trivial asymptotic homomorphisms.
\end{dfn}

There is a natural equivalence between the functors $\mathcal A_\delta\circ\Sigma$ and $\mathcal A_0$
\cite[cf.][Lemma 5.4]{thomsen-discrete-asymptotic-homomorphisms}. Indeed, we may define maps 
$\eta_B\colon\mathcal T_\delta\Sigma B\to\mathcal TB$ by
\[
  \eta_B(\phi)(t)=\phi(\lfloor t\rfloor)(t-\lfloor t\rfloor).
\]
Then $\eta_B(\phi)\in\mathcal T_0B$ whenever $\phi\in\mathcal T_{\delta,0}\Sigma B$, so there is an induced *-ho\-mo\-mor\-phism
$\eta_B\colon\mathcal A_\delta\Sigma B\to\mathcal AB$.

\begin{lem}\label{lem:discrete and sequentially trivial asymptotic homomorphisms}
  The *-ho\-mo\-mor\-phism $\eta_B\colon\mathcal A_\delta\Sigma B\to\mathcal AB$ is injective, and its image equals $\mathcal
  A_0B$.  Thus, $\eta_B\colon\mathcal A_\delta\Sigma B\to\mathcal A_0B$ is a natural *-iso\-mor\-phism.
\end{lem}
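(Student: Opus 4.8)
The plan is to prove everything first at the level of the function algebras $\mathcal T_\delta\Sigma B$ and $\mathcal TB$, and then read off all three assertions by passing to the quotients. Throughout I identify $\Sigma B$ with $\{\psi\in IB:\psi(0)=\psi(1)=0\}$, so that for $\phi\in\mathcal T_\delta\Sigma B=C_b(\N;\Sigma B)$ and $t\in[n,n+1)$ the defining formula reads $\eta_B(\phi)(t)=\phi(n)(t-n)$; thus $\eta_B$ simply glues a bounded sequence of suspension elements into a single function on $P=[0,\infty)$. A direct check shows that $\eta_B(\phi)$ is continuous (the one-sided limits at an integer $n$ are $\phi(n-1)(1)=0$ and $\phi(n)(0)=0$, matching the value $\phi(n)(0)=0$) and bounded by $\sup_n\|\phi(n)\|_\infty$, that $\eta_B$ is a $*$-homomorphism because every algebraic operation is applied pointwise, and — as already recorded in the excerpt — that $\eta_B(\mathcal T_{\delta,0}\Sigma B)\subseteq\mathcal T_0B$. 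These routine verifications are what make the induced map $\bar\eta_B\colon\mathcal A_\delta\Sigma B\to\mathcal AB$ well defined.

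For injectivity the key point is a norm identity. Since $\phi(n)(1)=0$, the supremum of $\|\phi(n)(s)\|$ over the half-open interval $s\in[0,1)$ coincides with the supremum over $[0,1]$, so that
\[
  \sup_{t\in[n,n+1)}\|\eta_B(\phi)(t)\|=\sup_{s\in[0,1)}\|\phi(n)(s)\|=\|\phi(n)\|_\infty .
\]
Consequently $\|\eta_B(\phi)(t)\|\to 0$ as $t\to\infty$ if and only if $\|\phi(n)\|_\infty\to 0$ as $n\to\infty$; in other words $\eta_B^{-1}(\mathcal T_0B)=\mathcal T_{\delta,0}\Sigma B$. In particular $\bar\eta_B[\phi]=0$ forces $\phi\in\mathcal T_{\delta,0}\Sigma B$, i.e.\ $[\phi]=0$, which is precisely injectivity of $\bar\eta_B$.

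For the image I first observe that $\eta_B(\phi)(n)=\phi(n)(0)=0$ for every $n\in\N$, so $\eta_B(\phi)$ vanishes on $\N$ and its class therefore lies in the kernel $\mathcal A_0B$ of $\mathcal AB\to\mathcal A_\delta B$; this gives $\im\bar\eta_B\subseteq\mathcal A_0B$. The reverse inclusion is where the only genuine work lies, and I expect it to be the main obstacle, though a mild one: given $[f]\in\mathcal A_0B$ represented by $f\in\mathcal TB$ with $f(n)\to 0$, I would correct $f$ so that it vanishes at all integers without altering its class. Concretely, let $g\in\mathcal TB$ be the piecewise-linear interpolation of the values $f(n)$, i.e.\ $g(n+s)=(1-s)f(n)+s\,f(n+1)$ for $s\in[0,1]$; then $g$ is bounded on $[n,\infty)$ by $\sup_{m\ge n}\|f(m)\|\to 0$, so $g\in\mathcal T_0B$, while $(f-g)(n)=0$ for all $n$. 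Setting $\phi(n)(s)=(f-g)(n+s)$ now defines an element $\phi\in\mathcal T_\delta\Sigma B$ (each $\phi(n)$ vanishes at both endpoints) with $\eta_B(\phi)=f-g$, whence $\bar\eta_B[\phi]=[f]$. The substance of this step is just the elementary fact that a bounded continuous function whose integer values tend to $0$ differs, modulo $\mathcal T_0B$, from one vanishing on all of $\N$, after which it splits cleanly into its pieces on the intervals $[n,n+1]$. Combining injectivity with $\im\bar\eta_B=\mathcal A_0B$ yields the asserted $*$-isomorphism $\mathcal A_\delta\Sigma B\xrightarrow{\ \cong\ }\mathcal A_0B$, and naturality in $B$ is immediate since the entire construction is carried out by postcomposition and hence commutes with any $*$-homomorphism $B\to B'$.
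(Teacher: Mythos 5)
Your proof is correct, but it is structured differently from the paper's. The paper arranges everything into a $3\times 3$ commutative diagram whose rows are the defining extensions and whose columns are $\mathcal T_{\delta,0}\Sigma B\to\mathcal T_0B\to\mathcal T_{\delta,0}B$, $\mathcal T_\delta\Sigma B\to\mathcal TB\to\mathcal T_\delta B$ and $\mathcal A_\delta\Sigma B\to\mathcal AB\to\mathcal A_\delta B$; after observing that the rows are exact by definition and checking (``straightforwardly'') that the two left columns are exact, it invokes the Nine Lemma to conclude that the right-hand column is exact, which is exactly the assertion of the lemma. You instead perform that diagram chase by hand: your norm identity $\sup_{t\in[n,n+1)}\|\eta_B(\phi)(t)\|=\|\phi(n)\|_\infty$ gives $\eta_B^{-1}(\mathcal T_0B)=\mathcal T_{\delta,0}\Sigma B$ and hence injectivity on the quotient, and your correction of a representative $f$ by the piecewise-linear interpolation $g\in\mathcal T_0B$ of its integer values, followed by cutting $f-g$ into suspension pieces, is precisely the chase proving $\mathcal A_0B\subseteq\im\bar\eta_B$ (in the paper this step is hidden in the exactness of the left columns, namely surjectivity of $\mathcal T_0B\to\mathcal T_{\delta,0}B$ together with the identification $\ker(\mathcal TB\to\mathcal T_\delta B)=\im\eta_B$). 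The computational content is therefore the same; the paper's route buys brevity by outsourcing the bookkeeping to homological algebra, while yours is self-contained, avoids the Nine Lemma entirely, and isolates the quantitative fact (the norm identity) that makes the kernel computation immediate.
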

\begin{proof}
  It suffices to prove that the rightmost column in the diagram
  \[
    \begin{tikzpicture}
      \matrix (m) [matrix of math nodes, row sep=2em, column sep=2em, commutative diagrams/every cell]
      {
        & 0 & 0 & 0 \\
      0 & \mathcal T_{\delta,0}\Sigma B & \mathcal T_\delta\Sigma B & \mathcal A_\delta\Sigma B & 0 \\
      0 & \mathcal T_0B & \mathcal TB & \mathcal AB & 0 \\
      0 & \mathcal T_{\delta,0}B & \mathcal T_\delta B & \mathcal A_\delta B & 0 \\
        & 0 & 0 & 0 \\
      };
      \path[-stealth, commutative diagrams/.cd, every label] (m-1-2) edge (m-2-2) (m-1-3) edge (m-2-3) (m-1-4) edge (m-2-4)
        (m-2-1) edge (m-2-2) (m-2-2) edge (m-2-3) edge (m-3-2) (m-2-3) edge (m-2-4) edge node [right] {$\eta_B$} (m-3-3) (m-2-4)
        edge (m-2-5) edge node [right] {$\eta_B$} (m-3-4) (m-3-1) edge (m-3-2) (m-3-2) edge (m-3-3) edge (m-4-2) (m-3-3) edge 
        (m-3-4) edge (m-4-3) (m-3-4) edge (m-3-5) edge (m-4-4) (m-4-1) edge (m-4-2) (m-4-2) edge (m-4-3) edge (m-5-2) (m-4-3)
        edge (m-4-4) edge (m-5-3) (m-4-4) edge (m-4-5) edge (m-5-4);
    \end{tikzpicture}
  \]
  is exact. Since the rows of the diagram are exact by definition, it suffices, by the Nine Lemma, to prove that the two left
  columns are exact, which is straightforward.
\end{proof}

Hence, there are natural bijections $\bbrack{A,\Sigma B}_\delta\to\bbrack{A,B}_0$, and we may equally well define $D(A,B)$ by
\[
  D(A,B)=\bbrack{\Sigma A\otimes\mathcal K,\Sigma B\otimes\mathcal K}_0.
\]
This description is most useful for constructing the products relating D-theory and E-theory. These products were introduced by
Connes and Higson \cite{connes-higson-deformations-morphismes-asymptotiques} for E-theory, and by Thomsen
\cite{thomsen-discrete-asymptotic-homomorphisms} for D-theory. The E-theory products were defined in a slightly different way by
Guentner, Higson and Trout \cite{guentner-higson-trout-equivariant-e-theory}. We will give definitions of the products which
combine the methods of Thomsen and of Guentner, Higson and Trout. Indeed, we will use the following statement from
\cite{guentner-higson-trout-equivariant-e-theory} to simplify the definition given in
\cite{thomsen-discrete-asymptotic-homomorphisms} significantly.

\begin{lem}[{\cite[Claim 2.18]{guentner-higson-trout-equivariant-e-theory}}]\label{lem:fundamental-claim-ght}
  Consider a C*-subalgebra $E\subset\mathcal T^2B$ which is separable. Then there exists a piecewise linear invertible continuous 
  function $r_0\colon P\to P$ with $\lim_{t\to\infty}r_0(t)=\infty$, such that
  \[
    \limsup_{t\to\infty}\sup_{r\geq r_0(t)}\|F(t)(r)\|\leq\|[\pi\circ F]\|_{\mathcal A^2B}
  \]
  for all $F\in E$, where $\pi\colon\mathcal TB\to\mathcal AB$ is the projection.\footnote{Hence $\pi\circ F\colon P\to\mathcal
  AB$ represents an element of $\mathcal A^2B$.} Any such function $r_0$ will be called an \emph{admissible reparametrization} for 
  $E\subset\mathcal T^2B$.\qed
\end{lem}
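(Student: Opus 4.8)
The plan is to first translate the quotient norm on the right-hand side into an iterated limit superior, then to exploit the separability of $E$ together with a diagonal argument, using the continuity of the elements of $E$ as $\mathcal{T}B$-valued functions to make the construction uniform over compact intervals.

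First I would record that, since the quotient norm of a class $[f]\in\mathcal AC=C_b(P;C)/C_0(P;C)$ equals $\limsup_{s\to\infty}\|f(s)\|$, applying this twice unwinds the right-hand side to
\[
  \|[\pi\circ F]\|_{\mathcal A^2B}=\limsup_{t\to\infty}\,\limsup_{r\to\infty}\|F(t)(r)\|.
\]
Writing $\Phi(F)=\limsup_{t\to\infty}\sup_{r\ge r_0(t)}\|F(t)(r)\|$ for the left-hand side and $\Psi(F)$ for the right-hand side, both functionals are $1$-Lipschitz in $F$ with respect to the norm of $\mathcal T^2B$: for $\Phi$ this is because $\|F(t)(r)\|\le\|G(t)(r)\|+\|F-G\|$ pointwise, and for $\Psi$ because $\pi$ and the quotient maps are contractions. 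Consequently it suffices to produce a single $r_0$ with $\Phi(F_k)\le\Psi(F_k)$ for every member $F_k$ of a fixed countable dense subset $\{F_k\}_{k\in\N}$ of $E$: given an arbitrary $F\in E$ and $\varepsilon>0$, choosing $F_k$ with $\|F-F_k\|<\varepsilon$ yields $\Phi(F)\le\Phi(F_k)+\varepsilon\le\Psi(F_k)+\varepsilon\le\Psi(F)+2\varepsilon$, and letting $\varepsilon\to0$ gives the claim for $F$.

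Next I would build $r_0$ by a diagonal construction. Set $\alpha_k=\Psi(F_k)$. Using the \emph{outer} limit superior I choose an increasing sequence $T_1<T_2<\cdots\to\infty$ so that $\limsup_{r\to\infty}\|F_k(t)(r)\|\le\alpha_k+1/n$ for all $k\le n$ and all $t\ge T_n$. The key step is to upgrade this pointwise-in-$t$ bound to one uniform over the compact interval $I_n=[T_n,T_{n+1}]$: since $F_k\in\mathcal T^2B=C_b(P;C_b(P;B))$, the map $t\mapsto F_k(t)$ is uniformly continuous on $I_n$ as a $C_b(P;B)$-valued function, so the estimate $\|F_k(t')(r)\|\le\alpha_k+1/n$ valid for large $r$ (depending on $t'$) spreads to a neighbourhood of $t'$ \emph{uniformly in $r$}. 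A finite subcover of $I_n$ then yields a single finite radius $R_{n,k}$ with $\|F_k(t)(r)\|\le\alpha_k+2/n$ for all $t\in I_n$ and all $r\ge R_{n,k}$. I then define $r_0$ to be the piecewise linear function with $r_0(0)=0$ and breakpoint values $r_0(T_n)=s_n$, where $s_n$ is chosen strictly increasing, tending to infinity, and with $s_n\ge\max_{k\le n}R_{n,k}$; this makes $r_0\colon P\to P$ a piecewise linear increasing homeomorphism, hence invertible, with $\lim_{t\to\infty}r_0(t)=\infty$.

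Finally I would verify the conclusion: for fixed $k$ and any $n\ge k$, every $t\in I_n$ satisfies $r_0(t)\ge r_0(T_n)=s_n\ge R_{n,k}$, whence $\sup_{r\ge r_0(t)}\|F_k(t)(r)\|\le\alpha_k+2/n$, and letting $t\to\infty$ (so that $n\to\infty$) gives $\Phi(F_k)\le\alpha_k=\Psi(F_k)$; the Lipschitz/density reduction then finishes the proof. The main obstacle is exactly this uniformity: the naive pointwise thresholds in $t$ need not be bounded on an interval, and it is the continuity of each $F_k$ as a $C_b(P;B)$-valued function, combined with compactness of $I_n$, that tames them, while the separability of $E$ is what permits a single reparametrization to serve all $F\in E$ simultaneously.
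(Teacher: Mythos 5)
Your proof is correct. Note that the paper itself gives no argument for this lemma at all --- it is quoted from Guentner--Higson--Trout (Claim 2.18) with a \qed --- so there is nothing internal to compare against; your write-up essentially reconstructs the standard argument from that reference. All the pieces check out: the identification $\|[\pi\circ F]\|_{\mathcal A^2B}=\limsup_{t\to\infty}\limsup_{r\to\infty}\|F(t)(r)\|$ via the quotient-norm formula, the $1$-Lipschitz/density reduction to a countable subset (this is exactly where separability enters), and the diagonal construction of $r_0$. The one genuinely delicate point --- that the radius threshold, which a priori depends on $t$, can be made uniform on each compact interval $[T_n,T_{n+1}]$ --- you handle correctly: continuity of $t\mapsto F_k(t)$ in the norm of $C_b(P;B)$ spreads the bound at $t'$ to a neighbourhood uniformly in $r$, and compactness gives a finite threshold $R_{n,k}$; the slack between $\alpha_k+1/n$ and $\alpha_k+2/n$ absorbs both the $\delta$ implicit in the limsup bound and the continuity error. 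The resulting strictly increasing piecewise linear interpolation through the points $(T_n,s_n)$ has all the properties demanded of an admissible reparametrization.
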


Now the key idea in the definition of the products is the following: Suppose that $f\colon A\to\mathcal AB$ and $g\colon
B\to\mathcal AC$ are asymptotic homomorphisms. Then we may consider $\mathcal Ag\circ f\colon A\to\mathcal A^2C$. Thus, if we
had a natural map $\Phi\colon\mathcal A^2C\to\mathcal AC$ then we could use this map to define the product $g\bullet
f=\Phi\circ\mathcal Ag\circ f\colon A\to\mathcal AC$.

Using Lemma \ref{lem:fundamental-claim-ght}, this idea can be made rigorous provided that $A$ is separable. Indeed, suppose that
$E_0\subset\mathcal A^2C$ is a separable C*-subalgebra. Let $\as_C\colon\mathcal T^2C\to\mathcal A^2C$ be the natural projection.
Then we can choose a separable C*-subalgebra $E\subset\mathcal T^2C$ with $E_0\subset\as_C(E)$, and an admissible
reparametrization $r_0\colon P\to P$ for $E$. We define *-ho\-mo\-mor\-phisms $\Phi,\hat\Phi\colon E_0\to\mathcal AC$ by
\begin{align*}
  \Phi([\pi\circ F])&=[t\mapsto F(t)(r_0(t))],\\
  \hat\Phi([\pi\circ F])&=[t\mapsto F(r_0^{-1}(t))(t)]
\end{align*}
for $F\in E$ with $\as_C(F)=[\pi\circ F]\in E_0$. Lemma \ref{lem:fundamental-claim-ght} makes sure that $\Phi$ and $\hat\Phi$
are well-defined *-ho\-mo\-mor\-phisms which, however, certainly do depend on the choices of $E_0$ and $r_0$.

Now suppose that $f\colon A\to\mathcal AB$ and $g\colon B\to\mathcal AC$ are asymptotic homomorphisms. If $A$ is separable, then
also $\mathcal Ag(f(A))\subset\mathcal A^2C$ is separable. Choose a separable C*-subalgebra $E\subset\mathcal T^2C$ with
$\mathcal Ag(f(A))\subset\as_C(E)$, and fix an admissible reparametrization $r_0\colon P\to P$ for $E$. Let $\Phi\colon\mathcal
Ag(f(A))\to\mathcal AC$ be a as constructed above. Then we define the asymptotic composition of $f$ and $g$ to be $g\bullet
f=\Phi\circ\mathcal Ag\circ f \colon A\to\mathcal AC$. This construction gives a product
$\bbrack{A,B}\times\bbrack{B,C}\to\bbrack{A,C}$, $([f],[g])\mapsto[g\bullet f]$, and hence a product $E(A,B)\times E(B,C)\to
E(A,C)$ if $A$ is separable. It turns out that the choices which went into the definition of $\Phi$ do not change this product,
and that one could equally well replace $\Phi$ by $\hat\Phi$.

Now the situation is very similar when one considers sequentially trivial asymptotic homomorphisms: If $f\colon A\to\mathcal
A_0B$ is sequentially trivial and $g\colon B\to\mathcal AC$ is as above, then we may choose $E\subset\mathcal T^2C$ with
$\mathcal Ag(f(A))\subset\as_C(E)$ in such a way that every $F\in E$ satisfies $F(n)(t)=0$ for all $n\in\N$ and $t\in P$. Then
$\Phi\circ\mathcal Ag\circ f\colon A\to\mathcal A_0C$ is sequentially trivial as well. Similarly, if $f\colon A\to\mathcal AB$
is arbitrary and $g\colon B\to\mathcal A_0C$ is sequentially trivial then we may choose $E$ such that $F\in E$ satisfies
$F(t)(n)=0$ for all $n\in\N$ and $t\in P$. In this case, we may put $g\bullet f=\hat\Phi\circ\mathcal Ag\circ f$. Thus, we have
defined products $\bbrack{A,B}_0\times\bbrack{B,C}\to\bbrack{A,C}_0$ and $\bbrack{A,B}\times\bbrack{B,C}_0\to\bbrack{A,C}_0$.
Using either of the natural maps $\bbrack{A,B}_0\to\bbrack{A,B}$ or $\bbrack{B,C}_0\to\bbrack{B,C}$, we also obtain a product
$\bbrack{A,B}_0\times\bbrack{B,C}_0\to\bbrack{A,C}_0$. All of these products are compatible with the maps
$\bbrack{\cdot,\cdot}_0\to\bbrack{\cdot,\cdot}$, and they satisfy all possible kinds of associativity laws \cite[cf.][Section
3]{thomsen-discrete-asymptotic-homomorphisms}.

There are two natural ways to define a group structure on the sets $E(A,B)$ and $D(A,B)$. The first one, which is employed in
\cite{guentner-higson-trout-equivariant-e-theory} and \cite{thomsen-discrete-asymptotic-homomorphisms}, goes as follows: Choose
a unitary isomorphism $V\colon\ell^2\to\ell^2\oplus\ell^2$. Then conjugation with $V$ induces a C*-algebra isomorphism $\mathcal
K(\ell^2\oplus\ell^2)\to\mathcal K(\ell^2)$, which is independent of the choice of $V$ up to homotopy. We may therefore define a
product on the set $\bbrack{A,\Sigma B\otimes\mathcal K}$ by the composition
\begin{align*}
  \bbrack{A,\Sigma B\otimes\mathcal K}\times\bbrack{A,\Sigma B\otimes\mathcal K}
  &\cong\bbrack{A,\Sigma B\otimes(\mathcal K\oplus\mathcal K)}\\
  &\to \bbrack{A,\Sigma B\otimes\mathcal K(\ell^2\oplus\ell^2)}\cong\bbrack{A,\Sigma B\otimes\mathcal K}.
\end{align*}
Another way would be to employ the natural map $\Sigma\oplus\Sigma\to\Sigma$ given by concatenation, where $\Sigma\cong
C_0(0,1)$. It can be shown that these two products on $\bbrack{A,\Sigma B\otimes\mathcal K}$ agree and define an abelian group
structure on $\bbrack{A,\Sigma B\otimes\mathcal K}$. In an entirely analogous fashion, also $\bbrack{A,\Sigma B\otimes\mathcal
K}_0$ and $\bbrack{A,\Sigma B\otimes\mathcal K}_\delta$ (and hence in particular $D(A,B)$ and $E(A,B)$) are abelian groups. 

One can show that the asymptotic products described above define group homomorphisms $D(A,B)\times D(B,C)\to D(A,C)$,
$D(A,B)\times E(B,C)\to D(A,C)$, $E(A,B)\times D(B,C)\to D(A,C)$, and $E(A,B)\times E(B,C)\to E(A,C)$.

\section{\texorpdfstring{Calculation of $D(\Sigma,B)$}{Calculation of D(Σ,B)}}

In this section, we will give a calculation of the group $D(\Sigma,B)$, for any separable C*-algebra $B$. This calculation bears
some similarities with the well-known natural isomorphism $E(\C,B)\cong K_0(B)$, which we will review first.

Let $B$ be an arbitrary C*-algebra, and consider a unitary $u\in M_n(B_+)$ with $u-1\in M_n(B)$.\footnote{Here $B_+$ is the
unitization of the C*-algebra $B$.} As usual, we denote the set of such unitaries by $U_n^+(B)\subset M_n(B_+)$. In particular,
$u$ represents an element $[u]\in K_1(B)$. We identify the C*-algebra $\Sigma=C_0(0,1)$ with the C*-algebra of all functions
$\phi\in C(S^1)$ with $\phi(1)=0$. Then there exists a unique *-ho\-mo\-mor\-phism $g_u\colon\Sigma\to M_n(B)\subset
B\otimes\mathcal K$ with $g_u(\omega)=u-1$ where $\omega\colon S^1\to\C$ is given by $\omega(z)=z-1$. We define a map
\[
  g^B\colon K_1(B)\to\bbrack{\Sigma,B\otimes\mathcal K}
\]
by $[u]\mapsto[\kappa_{B\otimes\mathcal K}\circ g_u]$ where $\kappa_{B\otimes\mathcal K}\colon B\otimes\mathcal K\to\mathcal
A(B\otimes\mathcal K)$ is given by $\kappa_{B\otimes\mathcal K}(x)=[t\mapsto x]$. The following statement is well-known:

\begin{prop}[{\cite[Theorem 4.1]{rosenberg-role-of-ktheory} and 
  \cite[Proposition 2.19]{guentner-higson-trout-equivariant-e-theory}}]\label{prop:K1 and asymptotic homotopy classes}
  For every C*-algebra $B$, the map $g^{\Sigma B}\colon K_1(\Sigma B)\to\bbrack{\Sigma,\Sigma B\otimes\mathcal K}$ is an 
  isomorphism of groups.\qed
\end{prop}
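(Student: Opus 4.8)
The plan is to show that the map $g^{\Sigma B}$ is both a group homomorphism and a bijection, realising it as a version of the standard identification $E(\C, \Sigma B) \cong K_0(\Sigma B) \cong K_1(\Sigma^2 B)$ already present in the literature. First I would verify that $g^{\Sigma B}$ is a group homomorphism. Here the group structure on $\bbrack{\Sigma, \Sigma B \otimes \mathcal K}$ is, by the remarks above, induced either by the ``$\mathcal K \oplus \mathcal K \to \mathcal K$'' direct-sum map or equivalently by the concatenation map $\Sigma \oplus \Sigma \to \Sigma$ on the source. Since $g_u$ is built from the universal property of $\Sigma \cong C_0(0,1)$ applied to the unitary $u$, the sum $[u] + [v]$ in $K_1(\Sigma B)$, represented by $\diag(u,v) \in U^+_{2n}(\Sigma B)$, maps under $g$ to the direct sum of the two *-homomorphisms, which is precisely the sum in $\bbrack{\Sigma, \Sigma B \otimes \mathcal K}$. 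Thus I would reduce homomorphism-ness to a direct comparison of the two ways of forming block-diagonal unitaries, which is routine.

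For bijectivity, the cleanest route is to identify $g^{\Sigma B}$ with a known isomorphism rather than constructing an inverse by hand. I would use that, for separable source $\Sigma$, the set $\bbrack{\Sigma, \Sigma B \otimes \mathcal K}$ of asymptotic homotopy classes (where the source is \emph{not} further suspended, so this is not literally an $E$-theory group, but a set of homotopy classes of asymptotic homomorphisms out of $\Sigma$) can be compared with genuine *-homomorphisms up to homotopy. The key input is that an asymptotic homomorphism $\Sigma \to \mathcal A(\Sigma B \otimes \mathcal K)$ out of the one-dimensional-generated, nuclear, contractible-modelled cone-type algebra $\Sigma$ can be straightened: because $\Sigma$ is semiprojective enough relative to the asymptotic algebra, every class is represented (up to asymptotic homotopy) by $\kappa \circ h$ for an honest *-homomorphism $h\colon \Sigma \to \Sigma B \otimes \mathcal K$, and homotopic such $h$ give asymptotically homotopic classes. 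This reduces the target to $[\Sigma, \Sigma B \otimes \mathcal K]$, the ordinary homotopy classes of *-homomorphisms, which by the universal property of $\Sigma = C_0(0,1)$ and Bott/stability is exactly $K_1(\Sigma B)$ via $u \mapsto g_u$.

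Concretely, I would assemble the proof as a chain of natural isomorphisms, invoking the two cited sources for the two halves: Rosenberg's \cite{rosenberg-role-of-ktheory} Theorem 4.1 supplies the identification of homotopy classes of *-homomorphisms out of $\Sigma$ with $K_1$, while Guentner--Higson--Trout's \cite{guentner-higson-trout-equivariant-e-theory} Proposition 2.19 supplies the statement that for the source $\Sigma$ the canonical map from *-homomorphisms to asymptotic homomorphisms induces a bijection on homotopy classes. The naturality in $B$ follows since every map in the chain is induced by functorial constructions ($g_u$, $\kappa$, and the restriction/stability maps), and a *-homomorphism $B \to B'$ commutes with all of them on the nose.

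The main obstacle I anticipate is the surjectivity/well-definedness at the level of asymptotic homomorphisms: one must know that an arbitrary asymptotic homomorphism $\Sigma \to \mathcal A(\Sigma B \otimes \mathcal K)$ is asymptotically homotopic to one coming from an honest *-homomorphism $\Sigma \to \Sigma B \otimes \mathcal K$. This is exactly the content that is being imported from \cite{guentner-higson-trout-equivariant-e-theory}, and it relies on the fact that $\Sigma$ has a single generator satisfying a relation that lifts stably through the quotient $\mathcal T(\Sigma B \otimes \mathcal K) \to \mathcal A(\Sigma B \otimes \mathcal K)$; the subtlety is controlling the asymptotic relations (the generator $\omega$ is not a projection or unitary, so one lifts a contraction satisfying approximate functional-calculus relations and uses a homotopy to correct it). Since the statement is labelled well-known and both halves are explicitly cited, I would present the proof as the composite of these two cited results together with the elementary homomorphism check, rather than reproving the lifting lemma.
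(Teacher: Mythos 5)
Your proposal is correct and takes essentially the same route as the paper: the paper offers no independent argument for this proposition but simply cites the two results you invoke — Rosenberg's Theorem 4.1, identifying homotopy classes of *-homomorphisms $[\Sigma,\Sigma B\otimes\mathcal K]$ with $K_1(\Sigma B)$ via $u\mapsto g_u$, and Guentner--Higson--Trout's Proposition 2.19, giving the bijection between those homotopy classes and the asymptotic homotopy classes $\bbrack{\Sigma,\Sigma B\otimes\mathcal K}$. Your assembly of these two citations, together with the routine block-diagonal additivity check, is exactly the intended reading of the paper's citation-only proof.
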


One can show easily that the inclusion $\Sigma\to\Sigma\otimes\mathcal K$ induces an isomorphism
$E(\C,B)=\bbrack{\Sigma\otimes\mathcal K,\Sigma B\otimes\mathcal K}\to\bbrack{\Sigma,\Sigma B\otimes\mathcal K}$. Together with
Proposition \ref{prop:K1 and asymptotic homotopy classes}, this establishes the isomorphism $K_0(B)\cong K_1(\Sigma B)\cong
E(\C,B)$ for arbitrary C*-algebras $B$.

Now let us turn to the calculation of $D(\Sigma,B)$, which has not appeared in the literature so far. Let again $B$ be a
C*-algebra, and let $(u_n)_{n\in\N}$ be a sequence in $\bigcup_{k\in\N}U_k^+(B)$, so that each $u_n$ represents an element of
$K_1(B)$. The map $\phi\colon\N\to B\otimes\mathcal K$, which is defined by $\phi(n)=u_n-1$, determines an element
$[\phi]\in\mathcal A_\delta(B\otimes\mathcal K)$ such that $[\phi]+1\in\mathcal A_\delta(B\otimes\mathcal K)_+$ is unitary.
Hence there exists a unique discrete asymptotic homomorphism $\tilde g_{(u_n)}\colon\Sigma\to\mathcal A_\delta(B\otimes\mathcal
K)$ such that $\tilde g_{(u_n)}(\omega)=[\phi]=[n\mapsto u_n-1]$. The map
\begin{align*}
  g_\delta^B\colon\prod_{n\in\N}K_1(B)&\to\bbrack{\Sigma,B\otimes\mathcal K}_\delta,\\
  ([u_n])_{n\in\N}&\mapsto[\tilde g_{(u_n)}],
\end{align*}
is well-defined: Indeed if we are given continuous paths $(u_n^\tau)_{\tau\in[0,1]}$ in $U_{k(n)}^+(B)$ then the same
construction as above yields a discrete asymptotic homotopy $H\colon\Sigma\to\mathcal A_\delta I(B\otimes\mathcal K)$
with $H(\omega)=[n\mapsto(\tau\mapsto u_n^\tau-1)]$, and $H$ is a discrete asymptotic homotopy connecting $\tilde g_{(u_n^0)}$
and $\tilde g_{(u_n^1)}$. The key step in calculating $D(\Sigma,B)$ is the following analogue of Proposition \ref{prop:K1 and
asymptotic homotopy classes}.

\begin{prop}\label{prop:g-delta}
  For every C*-algebra $B$, the map
  $g_\delta^{\Sigma B}\colon\prod_{n\in\N}K_1(\Sigma B)\to\bbrack{\Sigma,\Sigma B\otimes\mathcal K}_\delta$ is a surjective group
  homomorphism with
  \[
    \ker g_\delta^{\Sigma B}=\bigoplus_{n\in\N}K_1(\Sigma B),
  \]
  where $\bigoplus_{n\in\N}K_1(\Sigma B)\subset\prod_{n\in\N}K_1(\Sigma B)$ is the subgroup consisting of all sequences 
  $([u_n])_{n\in\N}$ which vanish eventually.
\end{prop}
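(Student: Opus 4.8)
The plan is to run, coordinate by coordinate, the same argument that underlies Proposition \ref{prop:K1 and asymptotic homotopy classes}, and to read off the quotient $\prod/\bigoplus$ from the behaviour of the sequences at infinity. The backbone is a reformulation. Since $\Sigma_+\cong C(S^1)$ with $\omega+1$ the unitary generator, a $*$-homomorphism $\Sigma\to C$ into any C*-algebra $C$ is the same datum as a unitary $v\in C_+$ with $v-1\in C$ (the restriction condition defining $\Sigma=\{f:f(1)=0\}$ forces, and is forced by, $v-1\in C$). Applied to $C=\mathcal A_\delta(\Sigma B\otimes\mathcal K)$, a discrete asymptotic homomorphism $\Sigma\to\mathcal A_\delta(\Sigma B\otimes\mathcal K)$ is exactly a unitary in the unitization whose difference from $1$ lies in the sequence-corona, and $\tilde g_{(u_n)}$ is the one given by the class of $(u_n-1)_n$. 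An asymptotic homotopy is, in the same way, a unitary over $\mathcal A_\delta(I(\Sigma B\otimes\mathcal K))$; crucially, lifting it to a bounded sequence $(H_n)_n$ in $C([0,1],\Sigma B\otimes\mathcal K)$, the defect of unitarity is, for each $n$, a single element of $C([0,1],\Sigma B\otimes\mathcal K)$ whose norm tends to $0$, hence is small \emph{uniformly in} $\tau\in[0,1]$ for large $n$. Thus for all large $n$ one has an honest norm-continuous path $\tau\mapsto H_n(\tau)+1$ of elements uniformly close to unitaries. This per-index availability of genuine paths is exactly the feature of the discrete theory that produces the product-modulo-sum answer.

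That $g_\delta^{\Sigma B}$ is a homomorphism I would deduce from the block-sum $u_n\oplus v_n$: it computes the sum in each $K_1(\Sigma B)$ and, under the stabilisation/concatenation group law on $\bbrack{\Sigma,\Sigma B\otimes\mathcal K}_\delta$, is carried to the sum of classes, this being the identical computation proving additivity of $g^{\Sigma B}$ in Proposition \ref{prop:K1 and asymptotic homotopy classes}, done simultaneously in every coordinate. For surjectivity, given a discrete asymptotic homomorphism $\sigma$ I lift the corresponding corona unitary to a bounded sequence $(y_n)_n$ in $(\Sigma B\otimes\mathcal K)_+$ with $y_n-1\in\Sigma B\otimes\mathcal K$ and $\|y_n^*y_n-1\|,\|y_ny_n^*-1\|\to0$. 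For large $n$ the polar part $u_n=y_n(y_n^*y_n)^{-1/2}$ is a genuine unitary with $u_n-1\in\Sigma B\otimes\mathcal K$ and $\|u_n-y_n\|\to0$; a standard finite-rank approximation of $u_n-1$ followed by re-unitarisation moves $u_n$, within an arbitrarily small error, into some $U_{k(n)}^+(\Sigma B)$ without changing its class in the corona. Then $(u_n-1)_n$ represents the same element of $\mathcal A_\delta(\Sigma B\otimes\mathcal K)$ as $\sigma$, so $\tilde g_{(u_n)}=\sigma$ and $\sigma$ lies in the image.

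For the kernel, the inclusion $\bigoplus_{n}K_1(\Sigma B)\subseteq\ker g_\delta^{\Sigma B}$ is direct: altering finitely many $u_n$ changes $(u_n-1)_n$ only by a sequence tending to $0$, so I may assume $u_n=1$ for small $n$ and $[u_n]=0$ for every $n$; choosing null-homotopies $(u_n^\tau)_\tau$, which are automatically norm-bounded as unitaries, assembles a discrete asymptotic homotopy exactly as in the well-definedness argument, connecting $\tilde g_{(u_n)}$ to the zero homomorphism. The reverse inclusion is the heart of the matter. Given an asymptotic homotopy from $\tilde g_{(u_n)}$ to $0$, the reformulation furnishes, for all large $n$, a genuine path in $(\Sigma B\otimes\mathcal K)_+$ of elements uniformly (in $\tau$) within distance $\tfrac12$ of the unitaries, running from within $o(1)$ of $u_n$ to within $o(1)$ of $1$. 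Polar decomposition turns this into a norm-continuous path of honest unitaries, so $u_n$ is homotopic to $1$ and $[u_n]=0$ in $K_1(\Sigma B\otimes\mathcal K)\cong K_1(\Sigma B)$ for every sufficiently large $n$; that is, $([u_n])_n\in\bigoplus_{n}K_1(\Sigma B)$.

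The hard part will be the bookkeeping in the two reformulations rather than any single estimate: verifying that unitarity in the sequence-corona is precisely asymptotic unitarity of a lifting, and that an asymptotic homotopy supplies paths that are uniformly almost-unitary in $\tau$ for large $n$. Once these are in place, the unitarisation by polar decomposition and the finite-rank reduction are routine. The two places where genuine care is needed are the finite-matrix reduction in surjectivity and the $\tau$-uniformity in the kernel argument; both are controlled by the norm estimate built into the reformulation, and it is exactly the discreteness of the index set $\N$ (as opposed to $P=[0,\infty)$, where the reparametrisation of Lemma \ref{lem:fundamental-claim-ght} would intervene) that makes these per-index paths available and yields the clean quotient.
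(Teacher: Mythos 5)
Your proposal is correct and follows essentially the same route as the paper's own proof: identify homomorphisms out of $\Sigma$ with corona unitaries, prove surjectivity by lifting and unitarizing via polar decomposition inside the finite matrices $\bigcup_k M_k(\Sigma B)$, and prove $\ker g_\delta^{\Sigma B}\subseteq\bigoplus_n K_1(\Sigma B)$ by lifting the asymptotic homotopy to (almost-)unitary paths and using that sufficiently close unitaries are homotopic. The only cosmetic differences are that the paper performs the finite-matrix reduction before the polar decomposition rather than after, and establishes additivity via concatenation $u_n * v_n$ rather than block sums $u_n\oplus v_n$; both are legitimate since the two group structures on $\bbrack{\Sigma,\Sigma B\otimes\mathcal K}_\delta$ agree.
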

\begin{proof}
  We begin by proving that $g_\delta^{\Sigma B}$ is surjective. Thus, we consider an arbitrary element
  $[h]\in\bbrack{\Sigma,\Sigma B\otimes\mathcal K}_\delta$ which is represented by a discrete asymptotic homomorphism
  $h\colon\Sigma\to\mathcal A_\delta(\Sigma B\otimes\mathcal K)$. We may write $h(\omega)=[G]$ for a map $G\colon\N\to
  \Sigma B\otimes\mathcal K$. We may replace each $G(n)$ by an element of
  $\bigcup_{k\in\N}M_k(\Sigma B)\subset\Sigma B\otimes\mathcal K$ which is $n^{-1}$-close to $G(n)$, without altering
  $[G]\in\mathcal A_\delta(\Sigma B\otimes\mathcal K)$. Thus, we may assume that $G(n)\in\bigcup_{k\in\N}M_k(\Sigma B)$ for
  all $n\in\N$.

  We will show next that there exists a map $U\colon\N\to\Sigma B\otimes\mathcal K$ such that
  $U(n)\in\bigcup_{k\in\N}U_k^+(\Sigma B)$ for all $n\in\N$, and such that $[G]=[U-1]\in\mathcal
  A_\delta(\Sigma B\otimes\mathcal K)$. Indeed, $[G+1]\in\mathcal A_\delta(\Sigma B\otimes\mathcal K)$ must be unitary, so
  that 
  \[
    \lim_{n\to\infty}(G(n)+1)^*(G(n)+1)=1.
  \]
  We put $F(n)=G(n)+1$. Thus, $F(n)^*F(n)$ is invertible if $n$ is sufficiently large. Without loss of generality, $F(n)^*F(n)$
  is invertible for all $n\in\N$. Now we put $U(n)=F(n)(F(n)^*F(n))^{-1/2}$. It is straightforward to see that indeed
  $[G]=[F-1]=[U-1]\in\mathcal A_\delta(\Sigma B\otimes\mathcal K)$ and that each $U(n)$ is contained in
  $\bigcup_{k\in\N}U_k^+(\Sigma B)$. We have $g_\delta^{\Sigma B}([U(n)])_{n\in\N}=[\tilde g_{(U(n))}]$ where $\tilde
  g_{(U(n))}\colon\Sigma\to\mathcal A_\delta(\Sigma B\otimes\mathcal K)$ is determined by the property
  \[
    \tilde g_{(U(n))}(\omega)=[n\mapsto U(n)-1]=[U-1]=[G]=h(\omega).
  \]
  Hence, $g_\delta^{\Sigma B}([U(n)])_{n\in\N}=[\tilde g_{(U(n))}]=[h]$ and $g_\delta^{\Sigma B}$ is surjective.

  Next suppose that $(u_n)_{n\in\N}$ is a sequence of unitaries in $\bigcup_{k\in\N}U_k^+(\Sigma B)$ such that $g_\delta^{\Sigma
  B}([U(n)])_{n\in\N}=0\in\bbrack{\Sigma,\Sigma B\otimes\mathcal K}_\delta$. Thus, there exists a discrete asymptotic homotopy
  $H\colon\Sigma\to\mathcal A_\delta I(\Sigma B\otimes\mathcal K)$ with $\mathcal A\ev_0\circ H=\tilde g_{(u_n)}$ and $\mathcal
  A\ev_1\circ H=0$. As above, we may write $H(\omega)=[U-1]$ where $U\colon\N\to(I(\Sigma B\otimes\mathcal K))_+$ is a
  unitary-valued map with $U(n)-1\in I(\Sigma B\otimes\mathcal K)$ for all $n\in\N$. By assumption,
  $\lim_{n\to\infty}\|U(n)(0)-u_n\|=\lim_{n\to\infty}\|U(n)(1)-1\|=0$. A standard argument shows that therefore
  $[u_n]=[U(n)(0)]=[U(n)(1)]=[1]=0\in K_1(\Sigma B)$ if $n$ is sufficiently large. Thus,
  $([u_n])_{n\in\N}\in\bigoplus_{n\in\N}K_1(\Sigma B)$.

  On the other hand, if $([u_n])_{n\in\N}\in\bigoplus_{n\in\N}K_1(\Sigma B)$ then $\tilde g_{(u_n)}(\omega)=[n\mapsto
  u_n-1]=[n\mapsto 0]=0$, so that $g_\delta^{\Sigma B}([u_n])_{n\in\N}=0$. This completes the calculation of $\ker
  g_\delta^{\Sigma B}$.

  Finally, it remains to prove that $g_\delta^{\Sigma B}$ is additive. Thus, let $(u_n)_{n\in\N}$ and $(v_n)_{n\in\N}$ be two
  sequences in $\bigcup_{k\in\N}U_k^+(\Sigma B)$. It is a well-known fact that $[u_n]+[v_n]=[u_nv_n]=[u_n*v_n]\in K_1(\Sigma
  B)$, where $u_n*v_n$ is the concatenation of $u_n$ and $v_n$, viewed as elements of $\Sigma(M_k(B)_+)$. In particular,
  $g_\delta^{\Sigma B}([u_n]+[v_n])_{n\in\N}=[\tilde g_{(u_n*v_n)}]$, and by definition of the group structure on
  $\bbrack{\Sigma,\Sigma B\otimes\mathcal K}_\delta$ we obtain that indeed $[\tilde g_{(u_n*v_n)}]=[\tilde g_{(u_n)}]+[\tilde
  g_{(v_n)}] =g_\delta^{\Sigma B}([u_n])_{n\in\N}+g_\delta^{\Sigma B}([v_n])_{n\in\N}$.
\end{proof}

Note that Proposition \ref{prop:g-delta}, together with Bott periodicity for K-theory and for D-theory and the stability
isomorphism for D-theory, yields a chain of isomorphisms
\begin{align*}
  \frac{\prod_{n\in\N}K_0(B)}{\bigoplus_{n\in\N}K_0(B)}&\cong\frac{\prod_{n\in\N}K_1(\Sigma^3B)}{\bigoplus_{n\in\N}K_1(\Sigma^3B)}
  \cong\bbrack{\Sigma,\Sigma^3B\otimes\mathcal K}_\delta\\
  &\cong\bbrack{\Sigma^2\otimes\mathcal K,\Sigma^4B\otimes\mathcal K\otimes\mathcal K}_\delta\\
  &=D(\Sigma,\Sigma^2B\otimes\mathcal K)\cong D(\Sigma,B).
\end{align*}
However, we will give a more succinct description of this isomorphism next.
Suppose for the moment that $B$ is a unital C*-algebra. Then we define
\[
  \Psi_B\colon\prod_{n\in\N}K_0(B)\to D(\Sigma,B)
\]
as follows: We write an element of $\prod_{n\in\N}K_0(B)$ as $([p_n])_{n\in\N}$ where each $p_n$ is a projection in
$M_\infty(B)=\bigcup_{k\in\N}M_k(B)$. We consider the discrete asymptotic homomorphism 
$f_{(p_n)}\colon\C\to\mathcal A_\delta(B\otimes\mathcal K)$ which is determined by $f_{(p_n)}(1)=[n\mapsto p_n]$. 
Now we define $\Psi_B$ by the prescription $\Psi_B([p_n])_{n\in\N}=[\Sigma^2f_{(p_n)}\otimes\id_{\mathcal
K}]\in\bbrack{\Sigma^2\otimes\mathcal K,\Sigma^2B\otimes\mathcal K\otimes\mathcal K}_\delta=D(\Sigma,B\otimes\mathcal
K)\cong D(\Sigma,B)$. Of course,
\[
  \Sigma^2f_{(p_n)}\otimes\id_{\mathcal K}(\phi\otimes\psi\otimes T)=[n\mapsto\phi\otimes\psi\otimes p_n\otimes T]
\]
for all $\phi,\psi\in\Sigma$ and $T\in\mathcal K$. If $B$ is non-unital we define $\Psi_B$ by requiring that the diagram
\[
  \begin{tikzpicture}
    \matrix (m) [matrix of math nodes, row sep=2em, column sep=2em, commutative diagrams/every cell]
    {
      0 & \prod_{n\in\N}K_0(B) & \prod_{n\in\N}K_0(B_+) & \prod_{n\in\N}K_0(\C) & 0 \\
      0 & D(\Sigma,B) & D(\Sigma,B_+) & D(\Sigma,\C) & 0 \\
    };
    \path[-stealth, commutative diagrams/.cd, every label] (m-1-1) edge (m-1-2) (m-1-2) edge (m-1-3) edge [dashed] node [left]
      {$\Psi_B$} (m-2-2) (m-1-3) edge (m-1-4) edge node [left] {$\Psi_{B_+}$} (m-2-3) (m-1-4) edge (m-1-5) edge node [left]
      {$\Psi_\C$} (m-2-4) (m-2-1) edge (m-2-2) (m-2-2) edge (m-2-3) (m-2-3) edge (m-2-4) (m-2-4) edge (m-2-5);
  \end{tikzpicture}
\]
with exact rows commutes.

\begin{thm}\label{thm:calculation of D(Sigma,B)}
  For any C*-algebra $B$, the map $\Psi_B\colon\prod_{n\in\N}K_0(B)\to D(\Sigma,B)$ is a natural surjective group homomorphism
  with $\ker\Psi_B=\bigoplus_{n\in\N}K_0(B)$.
\end{thm}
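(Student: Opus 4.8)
The plan is to derive the theorem from Proposition~\ref{prop:g-delta}, after first reducing to the case that $B$ is unital. The defining diagram for $\Psi_B$ has exact rows: the sequence $0\to B\to B_+\to\C\to 0$ is split exact, and both $\prod_{n\in\N}K_0(-)$ and $D(\Sigma,-)$ send split exact sequences to split exact sequences (products are exact in the category of abelian groups, and any additive functor preserves split exactness). Granting the theorem for the unital algebras $B_+$ and $\C$, a diagram chase via the snake lemma---using naturality of $\Psi$ with respect to the unital splitting $\C\to B_+$ to kill the connecting homomorphism---shows that $\Psi_B$ is surjective and that $\ker\Psi_B=\ker\big(\bigoplus_n K_0(B_+)\to\bigoplus_n K_0(\C)\big)=\bigoplus_n K_0(B)$. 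Thus everything comes down to the unital case.

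For unital $B$, I would prove that $\Psi_B$ coincides with the composite $\Theta\circ g_\delta^{\Sigma^3 B}\circ\big(\prod_{n\in\N}\beta_B\big)$, where $\beta_B\colon K_0(B)\overset\cong\to K_1(\Sigma^3 B)$ is iterated Bott periodicity and $\Theta\colon\bbrack{\Sigma,\Sigma^3 B\otimes\mathcal K}_\delta\overset\cong\to D(\Sigma,B)$ is the chain of suspension, stability, and Bott isomorphisms displayed after Proposition~\ref{prop:g-delta}. Once this factorization is established, the theorem for unital $B$ is immediate: $\Psi_B$ is a group homomorphism, being a composite of the homomorphism $g_\delta^{\Sigma^3 B}$ with two isomorphisms; it is surjective because $g_\delta^{\Sigma^3 B}$ is; and $\ker\Psi_B=(\prod_n\beta_B)^{-1}(\bigoplus_n K_1(\Sigma^3 B))=\bigoplus_n K_0(B)$, since $\prod_n\beta_B$ restricts to an isomorphism $\bigoplus_n K_0(B)\cong\bigoplus_n K_1(\Sigma^3 B)$.

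To verify the factorization I would track a sequence of projections $(p_n)$ through both sides. On the one hand, $\Psi_B([p_n]_n)$ is represented by $\Sigma^2 f_{(p_n)}\otimes\id_{\mathcal K}$, which sends $\phi\otimes\psi\otimes T\mapsto[n\mapsto\phi\otimes\psi\otimes p_n\otimes T]$. On the other hand, the concrete form of Cuntz's periodicity map turns $(p_n)$ into an explicit sequence of unitaries $u_n\in\bigcup_k U_k^+(\Sigma^3 B)$, which $g_\delta^{\Sigma^3 B}$ sends to the discrete asymptotic homomorphism $\tilde g_{(u_n)}$ with $\tilde g_{(u_n)}(\omega)=[n\mapsto u_n-1]$; this must then be transported through $\Theta$, that is, through the natural equivalence $\mathcal A_\delta\Sigma\cong\mathcal A_0$ of Lemma~\ref{lem:discrete and sequentially trivial asymptotic homomorphisms} (in particular the reparametrization $\eta_B$), together with the D-theoretic stability and Bott isomorphisms. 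The crux of the proof, and its main obstacle, is to check that these two concrete representatives are asymptotically homotopic. This is delicate bookkeeping: one must keep precise track of the number of suspensions and of the order of the $\mathcal K$-factors, and it is exactly here that having Cuntz's periodicity map in explicit form is indispensable, since an abstract Bott isomorphism would not permit a direct comparison of the two discrete asymptotic homomorphisms. Finally, the naturality of $\Psi_B$ on unital algebras used in the reduction step is routine: a unital $*$-homomorphism $B\to B'$ intertwines $f_{(p_n)}$, the double suspension, and the stabilization, and hence intertwines the $\Psi$'s.
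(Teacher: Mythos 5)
There is a genuine gap, and it sits exactly where you put it yourself: the factorization $\Psi_B=\Theta\circ g_\delta^{\Sigma^3B}\circ\bigl(\prod_{n}\beta_B\bigr)$ is asserted but never proved, and the verification you defer as ``delicate bookkeeping'' is not bookkeeping at all. The map $\Theta$ contains, besides the suspension, stability, and $\mathcal A_\delta\Sigma\cong\mathcal A_0$ identifications, the D-theoretic Bott periodicity isomorphism $D(\Sigma,\Sigma^2B\otimes\mathcal K)\cong D(\Sigma,B)$. That isomorphism is defined abstractly (in the paper, via Cuntz-style index maps of a short exact sequence; alternatively via composition products with Bott classes), not by a formula on representing discrete asymptotic homomorphisms. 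So there is no direct way to take the explicit unitaries $u_n\in\bigcup_kU_k^+(\Sigma^3B)$ produced by the concrete periodicity map, push their class through $\Theta$, and compare the outcome with $\Sigma^2f_{(p_n)}\otimes\id_{\mathcal K}$ by exhibiting an asymptotic homotopy: one would first have to know how D-theoretic Bott periodicity acts on concrete representatives, which is essentially the difficulty the theorem itself encodes. Your plan thus defers the entire content of the proof to a step that, as set up, cannot be carried out by inspection.

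The paper's proof is organized precisely to avoid ever pushing an explicit representative across a Bott isomorphism in D-theory. First it identifies $\Psi_B$ with $\bar\Psi_B$, a composite involving only the single explicit Bott map $K_0(B)\to K_1(\Sigma B)$, $[p]\mapsto[\omega\otimes p+1]$, whose interaction with $g_\delta^{\Sigma B}$ is a one-line computation ($\tilde g(\psi)=[n\mapsto\psi\otimes p_n]$), followed by suspension and stabilization --- no D-theory Bott map occurs here; unitality enters only to make this comparison, and naturality extends it to all $B$. Second, it proves the surjectivity-and-kernel statement for double suspensions $\Sigma^2B$ --- which are never unital, so this case is invisible to your reduction --- because there, and only there, $\bar\Psi_{\Sigma^2B}$ can be rewritten via Lemma \ref{lem:discrete and sequentially trivial asymptotic homomorphisms}, stability, and the D-theory suspension isomorphism as a composite in which every map except $g_\delta^{\Sigma^3B}$ is an isomorphism, so Proposition \ref{prop:g-delta} finishes that case. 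Finally, it transports the result from $\Sigma^2B$ to general $B$ using only that Cuntz's periodicity isomorphisms are index maps, hence natural, hence commute with the natural transformation $\Psi$; the ``concrete form'' of Cuntz's map is exploited for its naturality, not to manufacture explicit unitaries. A smaller point: your justification that $D(\Sigma,-)$ carries $0\to B\to B_+\to\C\to0$ to a short exact sequence (``any additive functor preserves split exactness'') is not valid on the category of C*-algebras; what is needed is split exactness of D-theory, a nontrivial property that is also implicitly required to define $\Psi_B$ for non-unital $B$ in the first place.
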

\begin{proof}
  It is clear that $\Psi_B$ is natural in $B$. The proof consists of several parts: First we will assume that $B$ is unital in
  order to give another description of the map $\Psi_B$. By naturality, we can extend this description to non-unital
  C*-algebras. Secondly we will use this alternative description for the C*-algebra $\Sigma^2B$ in order to prove the statement
  of the theorem for double suspensions $\Sigma^2B$. Finally we will use the concrete description of Cuntz's version of Bott
  periodicity \cite{cuntz-k-theory-c*-algebras} to reduce the general case to the case of double suspensions.

  We define a map $\bar\Psi_B\colon\prod_{n\in\N}K_0(B)\to D(\Sigma,B\otimes\mathcal K)$ to be the composition
  \begin{align*}
    \prod_{n\in\N}K_0(B)&\xrightarrow{\beta}\prod_{n\in\N}K_1(\Sigma B)
    \xrightarrow{g_\delta^{\Sigma B}}\bbrack{\Sigma,\Sigma B\otimes\mathcal K}\xrightarrow{\Sigma{-}\otimes\id_{\mathcal K}}\\
                        &\to\bbrack{\Sigma^2\otimes\mathcal K,\Sigma^2B\otimes\mathcal K\otimes\mathcal K}_\delta
                        =D(\Sigma,B\otimes\mathcal K)\cong D(\Sigma,B),
  \end{align*}
  where $\beta$ is the Bott periodicity isomorphism.  It is clear that $\bar\Psi_B$ is natural in $B$. We will prove that
  $\Psi_B=\bar\Psi_B$. By naturality, it suffices to prove this for unital C*-algebras $B$.

  Thus, let $B$ be a unital C*-algebra, and let $p_n\in\bigcup_{k\in\N}M_k(B)$ be a sequence of projections, representing an
  element $([p_n])_{n\in\N}\in\prod_{n\in\N}K_0(B)$. By the description of the Bott periodicity isomorphism $K_0(B)\to
  K_1(\Sigma B)$, we have $\beta([p_n])_{n\in\N}=([\omega\otimes p_n+1])_{n\in\N}$. In particular, $g_\delta^{\Sigma
  B}\circ\beta([p_n])_{n\in\N}=[\tilde g]$ where $\tilde g\colon\Sigma\to\mathcal A_\delta(\Sigma B\otimes\mathcal K)$ is such
  that $\tilde g(\omega)=[n\mapsto\omega\otimes p_n]$. Of course, this implies that $\tilde g(\psi)=[n\mapsto\psi\otimes p_n]$
  for all $\psi\in\Sigma$. In particular, $\Sigma\tilde g\otimes\id_{\mathcal K}\colon\Sigma^2\otimes\mathcal K\to\mathcal
  A_\delta(\Sigma^2B\otimes\mathcal K\otimes\mathcal K)$ is such that $\Sigma\tilde g\otimes\id_{\mathcal
  K}(\phi\otimes\psi\otimes T)=[n\mapsto\phi\otimes\psi\otimes p_n\otimes T]$ for all $\phi,\psi\in\Sigma$ and $T\in\mathcal K$.
  Therefore, $\Sigma\tilde g\otimes\id_{\mathcal K}=\Sigma^2f_{(p_n)}\otimes\id_{\mathcal K}$, which implies that indeed
  $\Psi_B=\bar\Psi_B$.

  Now we consider a double suspension $\Sigma^2B$. In this case, $\bar\Psi_B$ can also be written as the composition
  \begin{align*}
    \prod_{n\in\N}K_0(\Sigma^2B)&\xrightarrow{\beta}\prod_{n\in\N}K_1(\Sigma^3B)
    \xrightarrow{g_\delta^{\Sigma^3B}}\bbrack{\Sigma,\Sigma^3B\otimes\mathcal K}_\delta
    \cong\bbrack{\Sigma,\Sigma^2B\otimes\mathcal K}_0\\
                                &\xrightarrow{{-}\otimes\id_{\mathcal K}}\bbrack{\Sigma\otimes\mathcal
                                K,\Sigma^2B\otimes\mathcal K\otimes\mathcal K}_0=D(\C,\Sigma B\otimes\mathcal K)\\
                                &\xrightarrow{\Sigma}D(\Sigma,\Sigma^2B\otimes\mathcal K)\cong D(\Sigma,\Sigma^2B),
  \end{align*}
  where all maps in this composition are isorphisms, except for $g_\delta^{\Sigma^3B}$ which is surjective with kernel equal to
  $\bigoplus_{n\in\N}K_1(\Sigma^3B)$ by Proposition \ref{prop:g-delta}. Since
  $\beta^{-1}(\bigoplus_{n\in\N}K_1(\Sigma^3B))=\bigoplus_{n\in\N}K_0(\Sigma^2B)$, this implies the claim of the theorem for
  $\Sigma^2B$.

  In the case of general $B$, we consider the diagram
  \[
    \begin{tikzpicture}
      \matrix (m) [matrix of math nodes, row sep=2em, column sep=2em, commutative diagrams/every cell]
      {
        0 & \bigoplus_{n\in\N}K_0(\Sigma^2B) & \prod_{n\in\N}K_0(\Sigma^2B) &[1em] D(\Sigma,\Sigma^2B) & 0 \\
        0 & \bigoplus_{n\in\N}K_0(B) & \prod_{n\in\N}K_0(B) & D(\Sigma,B) & 0 \\
      };
      \path[-stealth, commutative diagrams/.cd, every label] (m-1-1) edge (m-1-2) (m-1-2) edge (m-1-3) edge node [left]
        {$\cong$} (m-2-2) (m-1-3) edge node [above] {$\Psi_{S^2B}$} (m-1-4) edge node [left] {$\cong$} (m-2-3) (m-1-4) edge
        (m-1-5) edge node [left] {$\cong$} (m-2-4) (m-2-1) edge (m-2-2) (m-2-2) edge (m-2-3) (m-2-3) edge node [above]
        {$\Psi_B$} (m-2-4) (m-2-4) edge (m-2-5);
    \end{tikzpicture}
  \]
  where the vertical arrows are the periodicity isomorphisms coming from Cuntz's \cite{cuntz-k-theory-c*-algebras} version of
  Bott periodicity. Recall that these periodicity isomorphisms are the index maps associated to a certain short exact
  sequence of C*-algebras. By construction of these index maps, the diagram above commutes because the horizontal maps are given
  by natural transformations. We have already seen that the top row in the diagram is exact, so the bottom row must be exact as
  well.
\end{proof}

\section{The K-theoretical Kronecker pairing and D-theory}

As mentioned in the introduction, the calculation 
\[
  D(\Sigma,A)\cong\frac{\prod_{n\in\N}K_0(A)}{\bigoplus_{n\in\N}K_0(A)}
\]
implies that there are two different ways for defining the product of an element of
$\prod_{n\in\N}K_0(A)/\bigoplus_{n\in\N}K_0(A)$ with an E-theory class in $E(A,B)$, yielding an element of
$\prod_{n\in\N}K_0(B)/\bigoplus_{n\in\N}K_0(B)$. The following result states that these two products are in fact equal.

\begin{thm}\label{thm:kronecker pairing and d-theory}
  Let $A$ and $B$ be C*-algebras and fix $\eta\in E(A,B)$. Then the compositions
  \[
    \frac{\prod_{n\in\N}K_0(A)}{\bigoplus_{n\in\N}K_0(A)}
    \cong\frac{\prod_{n\in\N}E(\C,A)}{\bigoplus_{n\in\N}E(\C,A)}
    \to\frac{\prod_{n\in\N}E(\C,B)}{\bigoplus_{n\in\N}E(\C,B)}
    \cong\frac{\prod_{n\in\N}K_0(B)}{\bigoplus_{n\in\N}K_0(B)}
  \]
  and
  \[
    \frac{\prod_{n\in\N}K_0(A)}{\bigoplus_{n\in\N}K_0(A)}
    \cong D(S\C,A)\to D(S\C,B)
    \cong\frac{\prod_{n\in\N}K_0(B)}{\bigoplus_{n\in\N}K_0(B)},
  \]
  which are given by the respective composition product with $\eta$, coincide.
\end{thm}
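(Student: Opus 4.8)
The plan is to isolate the one nontrivial compatibility and reduce everything else to naturality. Both compositions in the statement have the shape ``apply an isomorphism, take a product with $\eta$, apply an isomorphism''. Since $\Psi_A$ and $\Psi_B$ are assembled in the proof of Theorem~\ref{thm:calculation of D(Sigma,B)} out of Bott periodicity $\beta$, the packaging map $g_\delta$, a stabilization, a suspension, and Cuntz's index isomorphism, I would first observe that all of these ingredients \emph{except} $g_\delta$ are natural transformations that commute with the E- and D-theory products: Bott periodicity is a product with the Bott element and commutes by associativity, while stabilization, suspension, and the Cuntz index maps are induced by natural transformations and hence commute with $-\bullet\eta$. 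Peeling these off reduces the theorem to the assertion that the square formed by $g_\delta^{\Sigma A}\colon\prod_n K_1(\Sigma A)\to\bbrack{\Sigma,\Sigma A\otimes\mathcal K}_\delta$ and $g_\delta^{\Sigma B}$ on the vertical sides, the entrywise E-theory product with $\eta$ on top, and the D-theory product with $\eta$ on the bottom, commutes modulo $\bigoplus_n K_1(\Sigma B)$. By Proposition~\ref{prop:g-delta} the map $g_\delta^{\Sigma B}$ is surjective with kernel exactly $\bigoplus_n K_1(\Sigma B)$, so commuting modulo $\bigoplus$ is precisely what is needed to conclude that the two induced maps on the product-modulo-sum groups agree.

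For the comparison itself I would fix an asymptotic homomorphism $\varphi$ representing $\eta$ and a sequence $(u_n)_n$ of unitaries representing the given element of $\prod_n K_1(\Sigma A)$, so that $g_\delta^{\Sigma A}([u_n])=[\tilde g_{(u_n)}]$ packages the honest $*$-homomorphisms $g_{u_n}$ — each of which represents the class $g^{\Sigma A}([u_n])\in E(\C,A)$ — into a single discrete asymptotic homomorphism. Transporting $\tilde g_{(u_n)}$ across the isomorphism $\eta_B\colon\mathcal A_\delta\Sigma(-)\cong\mathcal A_0(-)$ of Lemma~\ref{lem:discrete and sequentially trivial asymptotic homomorphisms}, the D-theory product along the bottom is computed as $\Phi\circ\mathcal A\varphi\circ f$ using a \emph{single} admissible reparametrization $r_0$, via the formula $\Phi([\pi\circ F])=[t\mapsto F(t)(r_0(t))]$. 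The explicit expression $\eta_B(\phi)(t)=\phi(\lfloor t\rfloor)(t-\lfloor t\rfloor)$ shows that the resulting sequentially trivial asymptotic homomorphism is organized along the intervals $[n,n+1]$, with the $n$-th interval carrying the suspension data of $u_n$. Reading off the $n$-th interval therefore produces a unitary over $\Sigma B$ built from $\varphi$ evaluated at inner times $r_0(t)$ with $t\in[n,n+1]$, that is, a ``snapshot'' of the E-theory composition $\varphi\bullet g_{u_n}$ taken at inner time $\approx r_0(n)$.

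The crux — and the step I expect to be the main obstacle — is to show that this snapshot represents, for all large $n$, exactly the E-theory product class $\eta\bullet g^{\Sigma A}([u_n])\in E(\C,B)\cong K_1(\Sigma B)$. Here I would use the two defining features of an admissible reparametrization from Lemma~\ref{lem:fundamental-claim-ght}: that $\lim_{t\to\infty}r_0(t)=\infty$, so that the inner sampling time $r_0(n)$ of the $n$-th slot tends to infinity, and that the $\limsup$ estimate forces the snapshot at inner time $r_0(n)$ to capture the asymptotic behaviour of $\varphi$ that defines the E-theory product. The independence of the E-theory product from the chosen admissible reparametrization then lets me identify the $n$-th slot of the D-product with $\varphi\bullet g_{u_n}$ in $K_1(\Sigma B)$ for all sufficiently large $n$. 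This yields two sequences in $\prod_n K_1(\Sigma B)$ that agree in all but finitely many entries, hence agree modulo $\bigoplus_n K_1(\Sigma B)$, which by the reduction of the first paragraph completes the proof. Throughout I would record that every map in sight is a group homomorphism by Proposition~\ref{prop:g-delta} and Theorem~\ref{thm:calculation of D(Sigma,B)}, so that no separate additivity check beyond the first paragraph's reductions is required.
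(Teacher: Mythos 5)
Your overall strategy (peel off the natural ingredients of $\Psi$, then compare slot-wise along $g_\delta$, working modulo $\bigoplus_n$) is reasonable in outline, but the third paragraph --- which you yourself flag as the crux --- contains a genuine gap, and the tool you invoke there cannot close it. You want to identify the $n$-th slot of the D-theory product with the E-theory class $[\varphi\bullet g_{u_n}]$ for all large $n$, citing the fact that the E-theory product is independent of the chosen admissible reparametrization. That independence statement compares two admissible reparametrizations \emph{for a fixed pair of asymptotic homomorphisms}, each producing genuinely asymptotic ($t\to\infty$) data. Your $n$-th slot is a different kind of object: it samples $\varphi$ only on the compact window of inner times $[r_0(n),r_0(n+1)]$, and $r_0$ restricted to $[n,n+1]$ is not --- and cannot be --- an admissible reparametrization for the pair $(\varphi,g_{u_n})$, since admissibility is a $\limsup_{t\to\infty}$ condition. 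Note also that ``$r_0(n)\to\infty$'' alone proves nothing: the inner time at which the class of the polar decomposition of $\varphi_r(u_n-1)+1$ stabilizes is a threshold $R_n$ depending on $u_n$, which a priori grows faster than $r_0(n)$; for an arbitrary function tending to infinity the slot-wise claim is simply false. What rules this out is admissibility of $r_0$ for the \emph{glued} family: applied to lifts of the unitarity-defect elements (which map to zero in $\mathcal A^2$), Lemma \ref{lem:fundamental-claim-ght} gives defects uniformly small over all inner times $r\geq r_0(t)$ for large $t$; one then needs polar decompositions and a two-parameter homotopy connecting the diagonal sampling $s\mapsto\varphi_{r_0(n+s)}(u_n(s)-1)$ to sampling at constant inner time, plus the observation that the constant-time class has already stabilized. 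None of this appears in your proposal, so the crux step as written assumes what the theorem asserts. (Smaller issues: the bottom row of your square needs an extra suspension before the product with $\eta$ is even defined, and Cuntz's index maps are connecting maps, not natural transformations of functors on C*-algebras, so their compatibility with $-\bullet\eta$ is not ``naturality''; for this theorem they can be avoided by using the factorization $\bar\Psi$.)

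The paper's proof sidesteps slot-wise extraction of $K_1$ classes entirely, and this is the missing idea. It works with projections $(p_n)$, takes for each $n$ an asymptotic homotopy $H_n$ between $\phi\mapsto f(\phi\otimes p_n)$ and the constant homomorphism $\phi\mapsto[t\mapsto\phi\otimes\tilde p_n]$ (this is exactly what the entrywise E-theory pairing provides), and then \emph{glues} these into the single element $[t\mapsto\psi(t-\lfloor t\rfloor)H_{\lfloor t\rfloor}(\phi)]$, to which it applies $\Phi$, obtaining one map into $\mathcal AI(\Sigma B\otimes\mathcal K)$, i.e.\ one asymptotic homotopy. Lemma \ref{lem:evaluation and Phi} (compatibility of $\Phi$ with $\mathcal A\ev_\tau$) identifies its endpoints as $f\bullet g$ and $\tilde g$, the sequentially trivial representatives of the two sides. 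So a single gluing construction plus one evaluation-compatibility lemma replaces your entire slot-wise analysis; if you pursue your route instead, the uniform-defect and homotopy arguments sketched above are the work you would have to supply.
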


Before we step into the proof of the theorem, we state a lemma that we will need in the course of the proof.

\begin{lem}\label{lem:evaluation and Phi}
  Let $B$ be a C*-algebra, and fix $\tau\in[0,1]$. Assume that $\tilde E\subset\mathcal A^2IB$ is separable, and put $\tilde
  E_\tau=\mathcal A^2\ev_\tau(\tilde E)\subset\mathcal A^2B$. Let $E\subset\mathcal T^2IB$ be separable with $\as_{IB}(E)=\tilde
  E$, and put $E_\tau=\mathcal T^2\ev_\tau(E)\subset\mathcal T^2B$ (so that in particular $\as_B(E_\tau)=\tilde E_\tau$). Let
  $r_0\colon P\to P$ be a reparametrization which is admissible for both $E$ and $E_\tau$. Then the diagram
  \[
    \begin{tikzpicture}
      \matrix (m) [matrix of math nodes, row sep=3em, column sep=3em, commutative diagrams/every cell]
      {
        \tilde E & \mathcal AIB \\
        \tilde E_\tau & \mathcal AB \\
      };
      \path[-stealth, commutative diagrams/.cd, every label] (m-1-1) edge node [above] {$\Phi$} (m-1-2) edge node [left]
        {$\mathcal A^2\ev_\tau$} (m-2-1) (m-1-2) edge node [right] {$\mathcal A\ev_\tau$} (m-2-2) (m-2-1) edge node [above]
        {$\Phi$} (m-2-2);
    \end{tikzpicture}
  \]
  commutes if we use $r_0$ to define both horizontal maps $\Phi$.
\end{lem}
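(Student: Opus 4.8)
The plan is to unwind both composite maps $\mathcal A\ev_\tau\circ\Phi$ and $\Phi\circ\mathcal A^2\ev_\tau$ explicitly on a representative and observe that they produce the same element of $\mathcal AB$. The key point is that the map $\Phi$ is defined via the formula $\Phi([\pi\circ F]) = [t\mapsto F(t)(r_0(t))]$ for $F$ in an appropriate lift, and since the \emph{same} reparametrization $r_0$ is used for both horizontal maps by hypothesis, the two composites should match by a direct diagram chase at the level of representatives in $\mathcal T^2(\cdot)$.

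**First**, I would fix an element of $\tilde E$ and choose a lift. Let $\tilde F\in\tilde E$, and pick $F\in E\subset\mathcal T^2IB$ with $\as_{IB}(F)=\tilde F$, so that $\tilde F=[\pi_{IB}\circ F]$ where $\pi_{IB}\colon\mathcal TIB\to\mathcal AIB$ is the projection. Here $F$ is a function $F\colon P\to\mathcal TIB$, i.e. $F(t)(r)\in IB$ is a $B$-valued continuous function on $[0,1]$, which I will write as $F(t)(r)(\tau)\in B$. The crucial observation is that evaluation at $\tau$ commutes with all the structure: applying $\mathcal T^2\ev_\tau$ to $F$ yields the element $F_\tau\in E_\tau\subset\mathcal T^2B$ given by $F_\tau(t)(r)=F(t)(r)(\tau)$, and by construction $\as_B(F_\tau)=\mathcal A^2\ev_\tau(\tilde F)$, so $F_\tau$ is a legitimate lift of $\mathcal A^2\ev_\tau(\tilde F)\in\tilde E_\tau$.

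**Next** I would compute both paths around the square. Going down then right, $\mathcal A^2\ev_\tau(\tilde F)=[\pi_B\circ F_\tau]$, and applying $\Phi$ (defined with $r_0$, admissible for $E_\tau$) gives $[t\mapsto F_\tau(t)(r_0(t))]=[t\mapsto F(t)(r_0(t))(\tau)]$ in $\mathcal AB$. Going right then down, $\Phi(\tilde F)=[t\mapsto F(t)(r_0(t))]\in\mathcal AIB$, and applying $\mathcal A\ev_\tau$ evaluates the $IB$-valued function at $\tau$, yielding $[t\mapsto F(t)(r_0(t))(\tau)]\in\mathcal AB$. These two expressions are literally the same class, so the square commutes. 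The only thing to check carefully is well-definedness: that $\Phi$ in each row does not depend on the chosen lift, which is guaranteed by Lemma~\ref{lem:fundamental-claim-ght} precisely because $r_0$ is admissible for both $E$ and $E_\tau$ (this is the role of the joint admissibility hypothesis).

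**The main obstacle** I anticipate is purely bookkeeping rather than conceptual: one must verify that the $\mathcal T^2$-level lift $F$ can be chosen compatibly so that $\ev_\tau$ genuinely intertwines the two instances of $\Phi$, and in particular that $F_\tau=\mathcal T^2\ev_\tau(F)$ really lies in $E_\tau$ with the correct image under $\as_B$. This is where the hypotheses $\as_{IB}(E)=\tilde E$ and $\as_B(E_\tau)=\tilde E_\tau$ are used. Once the representatives are set up so that evaluation at $\tau$ passes through the nested bracket notation in the obvious way, the commutativity is immediate from the defining formula for $\Phi$; no estimate beyond the well-definedness provided by Lemma~\ref{lem:fundamental-claim-ght} is required.
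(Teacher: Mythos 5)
Your proposal is correct and follows essentially the same argument as the paper: unwind both composites on a representative $F\in E$ lifting an element of $\tilde E$, observe that $\mathcal T^2\ev_\tau(F)=F_\tau\in E_\tau$ lifts $\mathcal A^2\ev_\tau[\pi\circ F]$, and note that both paths yield $[t\mapsto F(t)(r_0(t))(\tau)]$. Your additional remark on why joint admissibility of $r_0$ ensures both instances of $\Phi$ are well-defined is a point the paper leaves implicit, but it does not change the substance of the argument.
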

\begin{proof}
  Let $F\in E$ be arbitrary. Then $\Phi[\pi\circ F]=[t\mapsto F(t)(r_0(t))]\in\mathcal AIB$ and hence
  \[
    \mathcal A\ev_\tau\circ\Phi[\pi\circ F]=[t\mapsto F(t)(r_0(t))(\tau)]\in\mathcal AB.
  \]
  On the other hand, $\mathcal A^2\ev_\tau[\pi\circ F]=[\pi\circ F_\tau]$ where $F_\tau=\mathcal T^2\ev_\tau(F)\in E_\tau$.
  Therefore,
  \begin{align*}
    \Phi\circ\mathcal A^2\ev_\tau[\pi\circ F]&=\Phi[\pi\circ F_\tau]=[t\mapsto F_\tau(t)(r_0(t))]\\
                                             &=[t\mapsto F(t)(r_0(t))(\tau)]=\mathcal A\ev_\tau\circ\Phi[\pi\circ F]
  \end{align*}
  as claimed.
\end{proof}

\begin{proof}[Proof of Theorem \ref{thm:kronecker pairing and d-theory}]
  By a naturality argument, we may assume without loss of generality that $A$ and $B$ are unital. Let $(p_n)_{n\in\N}$ be a 
  sequence of projections in $A\otimes\mathcal K$, and represent $\eta$ by an asympttic homomorphism $f\colon A\to\mathcal AB$.
  Then the image of $[([p_n])_{n\in\N}]\in\prod_{n\in\N}K_0(A)/\bigoplus_{n\in\N}K_0(A)$ under the first composition is
  represented by a family $(\tilde p_n)_{n\in\N}$ of projections in $B\otimes\mathcal K$ which have the property that the
  asymptotic homotopy classes of the asymptotic homomorphisms $\phi\mapsto[t\mapsto\phi\otimes\tilde p_n]$ and $\phi\mapsto
  f(\phi\otimes p_n)$ agree. Let $H_n\colon\Sigma\to\mathcal AI(\Sigma B\otimes\mathcal K)$ be asymptotic homotopies with
  $\mathcal A\ev_0\circ H_n(\phi)=f(\phi\otimes p_n)$ and $\mathcal A\ev_1\circ H_n(\phi)=[t\mapsto\phi\otimes\tilde p_n]$ for
  all $\phi\in\Sigma$.

  We have to prove that the second composition in the statement of the theorem maps $[([p_n])_{n\in\N}]$ to $[([\tilde
  p_n])_{n\in\N}]$ as well. It follows from the description of $\Psi_A$ in Theorem \ref{thm:calculation of D(Sigma,B)} that
  under the identification
  \[
    \frac{\prod_{n\in\N}K_0(A)}{\bigoplus_{n\in\N}K_0(A)}\cong\bbrack{\Sigma^2,\Sigma^2A\otimes\mathcal K}_\delta
    \cong\bbrack{\Sigma^2,\Sigma A\otimes\mathcal K}_0,
  \]
  $[([p_n])_{n\in\N}]$ is first mapped to the class of the discrete asymptotic homomorphism
  $\psi\otimes\phi\mapsto[n\mapsto\psi\otimes\phi\otimes p_n]$, and then to the class of the sequentially trivial asymptotic
  homomorphism $g\colon\Sigma^2\to\mathcal A_0(\Sigma A\otimes\mathcal K)$ which is given by
  \[
    g(\psi\otimes\phi)=[t\mapsto\psi(t-\lfloor t\rfloor)\phi\otimes p_{\lfloor t\rfloor}].
  \]
  Analogously, $[([\tilde p_n])_{n\in\N}]$ is identified with the class of the sequentially trivial homomorphism $\tilde
  g\colon\Sigma^2\to\mathcal A_0(\Sigma B\otimes\mathcal K)$ which is given by $\tilde
  g(\psi\otimes\phi)=[t\mapsto\psi(t-\lfloor t\rfloor)\phi\otimes\tilde p_{\lfloor t\rfloor}]$. Thus, we have to prove that the
  sequentially trivial asymptotic homomorphisms $f\bullet g$ and $\tilde g$ are asymptotically homotopic. For appropriate
  choices in the respective definitions of $\Phi$ we have
  \begin{align*}
    f\bullet g(\psi\otimes\phi)&=\Phi\left(\mathcal Af[t\mapsto\psi(t-\lfloor t\rfloor)\phi\otimes p_{\lfloor t\rfloor}]\right)\\
                               &=\Phi\left[t\mapsto\psi(t-\lfloor t\rfloor)f(\phi\otimes p_{\lfloor t\rfloor})\right]\\
                               &=\Phi\left[t\mapsto\psi(t-\lfloor t\rfloor)\mathcal A\ev_0\circ H_{\lfloor
                               t\rfloor}(\phi)\right]\\
                               &=\Phi\circ\mathcal A^2\ev_0\left[t\mapsto\psi(t-\lfloor t\rfloor)H_{\lfloor
                               t\rfloor}(\phi)\right]\\
                               &=\mathcal A\ev_0\circ\Phi\left[t\mapsto\psi(t-\lfloor t\rfloor)H_{\lfloor
                               t\rfloor}(\phi)\right],
  \end{align*}
  where the last equality is due to Lemma \ref{lem:evaluation and Phi}. Of course, this is asymptotically homotopic to the
  discrete asymptotic homomorphism $\psi\otimes\phi\mapsto\mathcal A\ev_1\circ\Phi[t\mapsto\psi(t-\lfloor t\rfloor)H_{\lfloor
  t\rfloor}(\phi)]$, and again by Lemma \ref{lem:evaluation and Phi} we have
  \begin{align*}
    \mathcal A\ev_1\circ\Phi\left[t\mapsto\psi(t-\lfloor t\rfloor)H_{\lfloor t\rfloor}(\phi)\right]
    &=\Phi\left[t\mapsto\psi(t-\lfloor t\rfloor)\mathcal A\ev_1\circ H_{\lfloor t\rfloor}(\phi)\right]\\
    &=\Phi\left[t\mapsto\psi(t-\lfloor t\rfloor)\left[s\mapsto\phi\otimes\tilde p_{\lfloor t\rfloor}\right]\right]\\
    &=\left[t\mapsto\psi(t-\lfloor t\rfloor)\phi\otimes\tilde p_{\lfloor t\rfloor}\right]=\tilde g(\psi\otimes\phi).
  \end{align*}
  Thus, indeed $f\bullet g$ and $\tilde g$ are asymptotically homotopic.
\end{proof}

There is a notable special case of Theorem \ref{thm:kronecker pairing and d-theory}. In order to state it, we recall that for
any $j\in\N$, the $j$-th \emph{K-homology group} of a C*-algebra $B$ is given by $K^j(B)=E(B,\Sigma^j\C)$. In particular, if
$\eta\in K^j(B)$ is a K-homology class and $\xi\in K_\ell(B\otimes A)\cong E(\C,\Sigma^\ell(B\otimes A))$ is a K-theory class,
we can define the \emph{Kronecker pairing} of $\eta$ and $\xi$ to be
\[
  \langle\eta,\xi\rangle=(\Sigma^\ell\eta\otimes\id_A)\bullet\xi\in E(\C,\Sigma^{\ell+j}A)\cong K_{\ell+j}(A).
\]
Now suppose that $(\xi_n)_{n\in\N}$ is a sequence in $K_\ell(B\otimes A)$. This sequence then defines a
class in $\prod_{n\in\N}K_\ell(B\otimes A)/\bigoplus_{n\in\N}K_\ell(B\otimes A)\cong D(\Sigma,\Sigma^\ell(B\otimes A))$, and the 
sequence of Kronecker pairings $(\langle\eta,\xi_n\rangle)_{n\in\N}$ likewise defines an element of
$D(\Sigma,\Sigma^{\ell+j}A)$. Now Theorem \ref{thm:kronecker pairing and d-theory} directly implies the following:

\begin{corl}
  In this situation,
  \[
    \Psi_{\Sigma^{\ell+j}A}\left[(\langle\eta,\xi_n\rangle)_{n\in\N}\right]=(\Sigma^\ell\eta\otimes\id_A)\bullet
    \Psi_{\Sigma^\ell(B\otimes A)}\left[(\xi_n)_{n\in\N}\right]\in D(\Sigma,\Sigma^{\ell+j}A),
  \]
  where the right-hand side is defined using the composition product $E(\Sigma^\ell(B\otimes A),\Sigma^{\ell+j}A)\times
  D(\Sigma,\Sigma^\ell(B\otimes A))\to D(\Sigma,\Sigma^{\ell+j}A)$.\qed
\end{corl}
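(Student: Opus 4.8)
The plan is to exhibit the corollary as a direct specialization of Theorem~\ref{thm:kronecker pairing and d-theory}. First I would set $\eta'=\Sigma^\ell\eta\otimes\id_A\in E(\Sigma^\ell(B\otimes A),\Sigma^{\ell+j}A)$ and apply Theorem~\ref{thm:kronecker pairing and d-theory} with the roles of the theorem's algebras $A$ and $B$ played by $\Sigma^\ell(B\otimes A)$ and $\Sigma^{\ell+j}A$, respectively, and with the fixed E-theory class $\eta'$ in place of the theorem's $\eta$. Under the periodicity identifications $K_\ell(B\otimes A)\cong E(\C,\Sigma^\ell(B\otimes A))\cong K_0(\Sigma^\ell(B\otimes A))$ and $K_{\ell+j}(A)\cong E(\C,\Sigma^{\ell+j}A)\cong K_0(\Sigma^{\ell+j}A)$, the sequence $(\xi_n)_{n\in\N}$ represents a class in $\prod_{n\in\N}K_0(\Sigma^\ell(B\otimes A))/\bigoplus_{n\in\N}K_0(\Sigma^\ell(B\otimes A))$, which is the common domain of the two compositions in the theorem.

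Next I would match both sides against these two compositions. For the first composition, its middle arrow is the entrywise E-theory product $[(\zeta_n)_{n\in\N}]\mapsto[(\eta'\bullet\zeta_n)_{n\in\N}]$ on $E(\C,-)$, and by the very definition of the Kronecker pairing we have $\eta'\bullet\xi_n=(\Sigma^\ell\eta\otimes\id_A)\bullet\xi_n=\langle\eta,\xi_n\rangle$ for every $n$; hence the first composition sends $[(\xi_n)_{n\in\N}]$ to $[(\langle\eta,\xi_n\rangle)_{n\in\N}]$. For the second composition, its initial isomorphism is $\Psi_{\Sigma^\ell(B\otimes A)}$ (descended to the quotient), its middle arrow is the D-theory composition product with $\eta'=\Sigma^\ell\eta\otimes\id_A$, and its terminal isomorphism is the inverse of $\Psi_{\Sigma^{\ell+j}A}$.

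Theorem~\ref{thm:kronecker pairing and d-theory} asserts that the two compositions agree; evaluating this equality at $[(\xi_n)_{n\in\N}]$ and then applying $\Psi_{\Sigma^{\ell+j}A}$ to both sides cancels the terminal $\Psi_{\Sigma^{\ell+j}A}^{-1}$ of the second composition and yields
\[
  \Psi_{\Sigma^{\ell+j}A}\left[(\langle\eta,\xi_n\rangle)_{n\in\N}\right]
  =(\Sigma^\ell\eta\otimes\id_A)\bullet\Psi_{\Sigma^\ell(B\otimes A)}\left[(\xi_n)_{n\in\N}\right],
\]
which is the claimed formula. Since $\Psi$ factors through the product-modulo-sum quotient, the resulting map is well defined and no further argument is needed. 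The only genuine work---and hence the main obstacle---is to confirm that the substitution is a \emph{literal} instance of the theorem rather than a mere analogue: one must check that the suspension/Bott identifications $K_\ell\cong K_0\circ\Sigma^\ell$ used to phrase the Kronecker pairing coincide, as natural transformations compatible with both the E-theory and the D-theory products, with those implicit in Theorem~\ref{thm:kronecker pairing and d-theory} and in the definition of $\Psi$. This compatibility is standard, so the corollary follows.
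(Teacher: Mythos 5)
Your proof is correct and matches the paper's approach exactly: the paper derives the corollary as a direct specialization of Theorem~\ref{thm:kronecker pairing and d-theory} with $\Sigma^\ell(B\otimes A)$, $\Sigma^{\ell+j}A$, and $\Sigma^\ell\eta\otimes\id_A$ in the roles of $A$, $B$, and $\eta$, which is precisely your substitution. Your extra remark about compatibility of the Bott/suspension identifications is a sensible sanity check but not something the paper spells out either.
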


Thus, it is possible to use the D-theory product to calculate Kronecker pairings asymptotically.

\printbibliography

\end{document}